\newtheorem{theorem}{Theorem}[section]
\newtheorem{lemma}[theorem]{Lemma}
\newtheorem{proposition}[theorem]{Proposition}
\newtheorem{conjecture}[theorem]{Conjecture}
\newtheorem{corollary}[theorem]{Corollary}
\newtheorem{assumption}[theorem]{Assumption}
\theoremstyle{definition}
\newtheorem{definition}[theorem]{Definition}
\newtheorem{notation}[theorem]{Notation}
\newtheorem{construction}[theorem]{Construction}
\theoremstyle{remark}
\newtheorem{remark}[theorem]{Remark}
\newtheorem{claim}[theorem]{Claim}
\numberwithin{equation}{section}
\newcommand\AAA{\mathcal{A}} \newcommand\AAc{\mathcal{A}^{\circ}} \newcommand\CCp{\mathcal{C}^{+}}
\newcommand\kba{\overline{K}} 
\newcommand\NN{{\mathcal{N}}}   \newcommand\PP{{\mathbb{P}}}  \newcommand\spp{{\mathrm{sp}}}    
\newcommand\tx{{\rm \widetilde{X}}}
\newcommand\wnod{W^{\mathrm{nod}}}
\newcommand\GL{{\rm GL}} \newcommand\NS{{\rm NS}} \newcommand\Bir{{\rm Bir}} \newcommand\Aut{{\rm Aut}} \newcommand\Amp{{\rm Amp}}  \newcommand\Nef{{\rm Nef}} \newcommand\Pos{{\rm Pos}} \newcommand\Ker{{\rm Ker}} \newcommand\Spec{{\rm Spec}}
\newcommand\Nod{{\mathrm{Nod}}} \newcommand\Br{\text{\rm Br}}  
 \newcommand\tfx{\widetilde{\mathfrak{X}}}
\begin{document}

\title[automorphisms and the cone conjecture]{On automorphisms and the cone conjecture for Enriques surfaces in odd characteristic}

\author[Long\ \ Wang]{Long\ \ Wang}
\address{Graduate School of Mathematical Sciences, the University of Tokyo, 3-8-1 Komaba, Meguro-Ku, Tokyo 153-8914, Japan}
\email{wangl11@ms.u-tokyo.ac.jp}

\date{\today}

\begin{abstract} We prove that, for an Enriques surface in odd characteristic, the automorphism group is finitely generated and it acts on the effective nef cone with a rational polyhedral fundamental domain. We also construct a smooth projective surface in odd characteristic which is birational to an Enriques surface and whose automorphism group is discrete but not finitely generated.
\end{abstract}

\maketitle


\setcounter{section}{0}
\section{Introduction}

Morrison \cite{Mo93} and Kawamata \cite{Ka97} proposed the following conjecture concerning the structure of the effective nef cone of a \textit{Calabi--Yau variety} (i.e., a normal projective variety with mild singularities whose canonical divisor is numerically trivial).

\begin{conjecture}[Cone conjecture] Let $X$ be a Calabi--Yau variety over an algebraically closed field. Then there exists a rational polyhedral cone $\Pi$ which is a fundamental domain for the action of the automorphism group $\Aut(X)$ on the effective nef cone $\Nef^e(X) := \Nef(X)\cap \mathrm{Eff}(X)$, in the sense that $\Nef^e(X) =\bigcup_{g\in\Aut(X)}g^{\ast}\Pi$, and $\Pi^{\circ} \cap (g^{\ast}\Pi)^{\circ} = \varnothing$ unless $g^{\ast} = \mathrm{id}$.
\end{conjecture}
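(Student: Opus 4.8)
I will prove the cone conjecture, together with the finite generation of the automorphism group, for an Enriques surface $X$ over an algebraically closed field $k$ of odd characteristic $p$. The plan is to pass to the hyperbolic lattice $\mathrm{Num}(X)$ and read off the cone conjecture from the structure of the $\Aut(X)$-action on it. First I would collect the standard facts valid because $p\neq 2$: the numerical lattice $\mathrm{Num}(X)$ is isometric to the even unimodular hyperbolic lattice $E_{10}\cong U\oplus E_{8}$ of signature $(1,9)$; the torsion subgroup of $\Pic(X)$ is $\langle K_{X}\rangle\cong\ZZ/2\ZZ$; and, since $K_{X}\equiv 0$, Riemann--Roch shows that every nef divisor class on $X$ is effective, so that $\Nef^{e}(X)$ is the closed convex hull of the rational points of $\Nef(X)$. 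I would also recall the chamber description: $\Nef(X)$ is the closure of a single chamber for the decomposition of the positive cone $\CCp\subseteq\mathrm{Num}(X)\otimes\RR$ by the hyperplanes $\delta^{\perp}$, $\delta$ a nodal class (the class of a smooth rational curve), the group $\wnod$ generated by the reflections in the nodal classes permutes these chambers simply transitively, and hence $\Nef(X)$ is a fundamental domain for $\wnod$ acting on $\CCp$.

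The second ingredient is purely lattice-theoretic: $E_{10}$ is reflective, its reflection group (generated by the reflections in all $(-2)$-vectors) coincides with $\OO^{+}(E_{10})$, and the fundamental Weyl chamber of the $E_{10}$ root system — whose Coxeter diagram is $T_{2,3,7}$ — is a simplicial, hence rational polyhedral, fundamental domain $D_{0}$ for $\OO^{+}(E_{10})$ acting on $\CCp$. The crucial input, and the main obstacle, is the analogue in odd characteristic of the theorem of Barth--Peters: writing $\Gamma_{X}\subseteq\OO^{+}(\mathrm{Num}(X))$ for the image of $\Aut(X)$ — a subgroup that normalizes $\wnod$ because automorphisms permute nodal classes — one needs that the subgroup $\wnod\rtimes\Gamma_{X}$ generated by $\wnod$ and $\Gamma_{X}$ has finite index in $\OO^{+}(\mathrm{Num}(X))$. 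Over $\mathbb{C}$ this is exactly Barth--Peters (later sharpened by Nikulin and Namikawa), proved via the Torelli theorem, the image of $\Gamma_{X}$ modulo $\wnod$ being a level-$2$ congruence subgroup. In characteristic $p$ odd I would derive it from the canonical K3 cover $\pi\colon Y\to X$, which is étale precisely because $p\neq 2$: the pullback $\pi^{*}$ embeds $\mathrm{Num}(X)$ into $\NS(Y)$ as, essentially, the invariant lattice of the covering involution $\iota$, and one transports the corresponding finiteness statement for the K3 surface $Y$ — available in odd characteristic from the Torelli and Tate theorems for K3 surfaces, together with Ogus's crystalline Torelli theorem when $Y$ is supersingular — to the automorphisms of $Y$ commuting with $\iota$, and then descends to $X$. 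Controlling this descent, and the interplay between $\Gamma_{X}$, $\iota$ and the possibly infinite group $\wnod$, is where the real work lies; alternatively one might avoid $Y$ altogether by adapting to odd characteristic the explicit constructions of automorphisms from elliptic fibrations used over $\mathbb{C}$.

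Granting this finite-index statement, the conclusion is formal. Since $\OO^{+}(E_{10})$ has the rational polyhedral fundamental domain $D_{0}$ on $\CCp$, its finite-index subgroup $\wnod\rtimes\Gamma_{X}$ admits a fundamental domain that is a finite union of translates of $D_{0}$, from which the standard reduction extracts a genuine rational polyhedral fundamental domain $\Pi_{0}$ for $\wnod\rtimes\Gamma_{X}$ on $\CCp$. Since $\Pi_{0}$ has finitely many faces it meets only finitely many $\wnod$-translates of $\Nef(X)$; folding these back into $\Nef(X)$ — using that $\Gamma_{X}$ normalizes $\wnod$ and that $\Nef(X)$ is a fundamental domain for $\wnod$ — produces a fundamental domain for $\Gamma_{X}$ on $\Nef(X)$ that is again a finite union of rational polyhedral cones, hence can be taken to be a rational polyhedral cone $\Pi$; being rational, $\Pi$ lies in $\Nef^{e}(X)$ and remains a fundamental domain for $\Gamma_{X}$ there. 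Finally, $\Aut(X)$ is finitely generated: $\Gamma_{X}$ is generated by the finitely many transformations pairing the facets of $\Pi$ (the usual wall-crossing argument), the kernel of $\Aut(X)\to\OO(\mathrm{Num}(X))$ — the numerically trivial automorphisms — is finite, and an extension of a finitely generated group by a finite group is finitely generated.
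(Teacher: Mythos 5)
Your overall architecture coincides with the paper's: everything is reduced to the single statement that $G := \wnod\rtimes\Aut(X)^{\ast}$ has finite index in $\OO^{+}(\mathrm{Num}(X))$, after which the extraction of a rational polyhedral fundamental domain inside $\Nef^{e}(X)$ is formal (the paper invokes the general theorem of Ash--Mumford--Rapoport--Tai for arithmetic groups acting on self-dual homogeneous cones where you use the reflectivity of $E_{10}$; either works, and your folding-back step matches the paper's argument that a suitably $W$-translated $\Pi$ already lies in $\Nef^{e}(X)$). The problem is that this finite-index statement --- which you yourself identify as the crucial input and ``where the real work lies'' --- is not proved; it is only sketched, and the sketch does not go through as stated. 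You propose to transport ``the corresponding finiteness statement for the K3 surface $Y$'' to the automorphisms of $Y$ commuting with the covering involution $\iota$ and then descend to $X$. But knowing that $W_{Y}\rtimes\Aut(Y)^{\ast}$ has finite index in $\OO^{+}(\NS(Y))$ says nothing a priori about the size of the centralizer of $\iota$ in $\Aut(Y)$, which is what controls $\Aut(X)$; to produce automorphisms of $Y$ commuting with $\iota$ that realize a prescribed isometry of the invariant lattice $L$ one needs a Torelli-type theorem, not merely the finiteness statement. Moreover, for $Y$ of finite height no Torelli theorem is available in positive characteristic, so the appeal to ``the Torelli and Tate theorems for K3 surfaces'' has no content in that case.

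The paper fills this gap by splitting into two cases. When the K3-cover is supersingular, it applies Ogus's crystalline Torelli theorem to the pair $(\widetilde{X},\theta)$, showing that $G$ equals exactly the group $\OO(L)^{++}$ of restrictions to $L$ of period-preserving, $\theta^{\ast}$-equivariant isometries of $\NS(\widetilde{X})$, and that $\OO(L)^{++}\subseteq\OO(L)^{+}$ has finite index by Lieblich--Maulik's result that the period-preserving isometries form a finite-index subgroup. When the cover has finite height, it does not argue on the K3 side at all: it lifts the pair $(\widetilde{X},\theta)$ to characteristic $0$ via Jang's theorem on lifting tame automorphisms ($\theta$ is tame because $p>2$), obtains an Enriques surface $X_{0}$ over $\overline{K}$ with $\Aut(X_{0})^{\ast}\subseteq\Aut(X)^{\ast}$ and the same equivariant Weyl group, and then quotes Dolgachev's characteristic-$0$ finite-index theorem for $X_{0}$. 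Without one of these two arguments (or a genuine substitute), your proof is incomplete at its central step; the remaining, formal part of your write-up is correct.
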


Over the field of complex numbers, the conjecture was verified in the $2$-dimensional case (see \cite{To10}). However, even for smooth Calabi--Yau threefolds, the conjecture is widely open. We refer to \cite{LOP18} for a survey.

In this paper, we concentrate on the case of Enriques surfaces over a field of odd characteristic. The restriction on the characteristic is used to ensure that these surfaces admit an \'{e}tale cover by a K3 surface. Recently Lieblich--Maulik \cite{LM18} proved the cone conjecture for K3 surfaces in odd characteristic. As the study of Enriques surfaces is often reduced to their universal covers which are K3 surfaces, it is natural to verify the cone conjecture for Enriques surfaces using the corresponding result on K3 surfaces. See \cite{Na85, Ka97, OS01} for the corresponding result over the complex numbers. More precisely, we prove

\begin{theorem}\label{cone} Let $X$ be an Enriques surface over an algebraically closed field $k$ of odd characteristic. Then the action of the automorphism group $\Aut(X)$ on the effective nef cone $\Nef^{e}(X)$ has a rational polyhedral fundamental domain.
\end{theorem}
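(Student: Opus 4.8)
\emph{Strategy.} I would descend the cone conjecture from the K3 double cover of $X$, adapting to odd characteristic the approach of Namikawa and of Oguiso--Sakurai over $\CC$ and using Lieblich--Maulik's theorem in place of Sterk's.

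\emph{Reduction to the cover.} Since $\mathrm{char}(k)$ is odd, $X$ carries a canonical connected finite étale double cover $\pi\colon Y\to X$ with $Y$ a K3 surface, with Galois group generated by the fixed-point-free Enriques involution $\varepsilon$. Pull-back $\pi^{\ast}\colon\NS(X)_{\RR}\hookrightarrow\NS(Y)_{\RR}$ is injective with image the $\varepsilon^{\ast}$-invariant subspace $V:=\NS(Y)_{\RR}^{\varepsilon^{\ast}}$ (a rational invariant class $\alpha$ equals $\tfrac12\pi^{\ast}\pi_{\ast}\alpha$). As $\pi$ is finite and surjective, $\pi^{\ast}$ sends ample, nef and pseudo-effective classes to classes of the same type and conversely reflects each property (use $\pi_{\ast}\pi^{\ast}=\deg\pi$ together with positivity of push-forward), so $\pi^{\ast}$ identifies $\Nef^{e}(X)$ with $\Nef^{e}(Y)\cap V$. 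Because $\pi$ is the canonical cover and $\mathrm{char}(k)\neq2$, every automorphism of $X$ lifts to $Y$ in two ways differing by $\varepsilon$, giving an exact sequence $1\to\langle\varepsilon\rangle\to C\to\Aut(X)\to1$ with $C=C_{\Aut(Y)}(\varepsilon)$; since $C$ commutes with $\varepsilon$ it preserves $V$, $\varepsilon$ acts trivially on $V$, and the induced $\Aut(X)$-action on $V$ corresponds under $\pi^{\ast}$ to the tautological action on $\NS(X)_{\RR}$. Thus it suffices to find a rational polyhedral fundamental domain for $C$ acting on $\Nef^{e}(Y)\cap V$.

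\emph{Reformulation via finite index.} If $\varepsilon^{\ast}=\mathrm{id}$ on $\NS(Y)$ — e.g.\ for a generic $X$, where $\rho(Y)=10$ — then $V=\NS(Y)_{\RR}$, $C=\Aut(Y)$, and the assertion is exactly Lieblich--Maulik's theorem for $Y$; in general $V$ is a proper rational subspace and there is genuine content. Here I would recast the cone conjecture for $Y$ (as in Sterk's argument, via reduction theory for the hyperbolic lattice $\NS(Y)$, using that $\Nef(Y)$ is a fundamental chamber for the Weyl group $W_{Y}$ of $(-2)$-curves and that automorphisms preserve the nef cone) as the arithmetic statement that $W_{Y}\cdot\Gamma_{Y}$ has finite index in $\OO^{+}(\NS(Y))$, where $\Gamma_{Y}:=\mathrm{im}(\Aut(Y)\to\OO(\NS(Y)))$. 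Symmetrically, it then suffices to prove that $W_{X}\cdot\Gamma_{X}$ has finite index in $\OO^{+}(\mathrm{Num}(X))$, with $W_{X}$ the Weyl group of $(-2)$-curves on $X$ and $\Gamma_{X}:=\mathrm{im}(\Aut(X)\to\OO(\mathrm{Num}(X)))$: granting this, Looijenga's theorem on arithmetic groups acting on the rational closure of a polyhedral cone produces a rational polyhedral fundamental domain for $\Gamma_{X}$, hence for $\Aut(X)$, on $\Nef^{e}(X)$, and finite generation of $\Aut(X)$ then follows formally from the existence of such a domain.

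\emph{Transfer of the finite-index statement — the main obstacle.} Restriction to $V$ gives a homomorphism $C_{\OO(\NS(Y))}(\varepsilon^{\ast})\to\OO(V_{\ZZ})$, where $V_{\ZZ}=\NS(Y)^{\varepsilon^{\ast}}$ contains $\pi^{\ast}\mathrm{Num}(X)$ with finite index and is commensurable with $\mathrm{Num}(X)$ after rescaling; since the discriminant groups in play are finite, this map has finite-index image, and hence so does the restriction of any finite-index subgroup of $C_{\OO(\NS(Y))}(\varepsilon^{\ast})$, in particular of $W_{Y}\Gamma_{Y}\cap C_{\OO(\NS(Y))}(\varepsilon^{\ast})$. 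The crux is to show this restricted group lies, up to finite index, in $W_{X}\cdot\Gamma_{X}$. One matches generators geometrically: if $C$ is a $(-2)$-curve on $X$, then $\pi^{\ast}C=C_{1}+C_{2}$ splits into two disjoint $(-2)$-curves exchanged by $\varepsilon$, and $\pi^{\ast}$ carries $s_{C}$ to the restriction to $V$ of $s_{C_{1}}s_{C_{2}}\in W_{Y}\cap C_{\OO(\NS(Y))}(\varepsilon^{\ast})$; and the image of $C=C_{\Aut(Y)}(\varepsilon)$ has finite index in $\Gamma_{Y}\cap C_{\OO(\NS(Y))}(\varepsilon^{\ast})$ (the discrepancy lying in the finite kernel of $\Aut(Y)\to\OO(\NS(Y))$) and restricts on $V$ into $\Gamma_{X}$. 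The difficulty is that an isometry of $\NS(Y)$ commuting with $\varepsilon^{\ast}$ need not split as a Weyl element times an automorphism each commuting with $\varepsilon^{\ast}$; to handle this I would argue at the level of chambers — such an isometry sends $\Nef(Y)$ to another $W_{Y}$-chamber, hence sends $\pi^{\ast}\Nef(X)=\Nef(Y)\cap V$ to another $W_{X}$-chamber of $\Pos(X)=\Pos(Y)\cap V$, so after multiplying by an element of $W_{X}$ it stabilizes $\Nef(X)$, whereupon the Torelli-type input on $Y$ forces it to be realized, up to finite index, by an automorphism of $X$. Making this precise — bookkeeping the finite-index gap between $\pi^{\ast}\mathrm{Num}(X)$ and $\NS(Y)^{\varepsilon^{\ast}}$, the numerically trivial automorphisms of $X$, and the exact relation of $W_{X}$ to $W_{Y}\cap C_{\OO(\NS(Y))}(\varepsilon^{\ast})$, and finally promoting the resulting fundamental set (a finite union of rational polyhedral cones) to an honest rational polyhedral fundamental domain by the standard surgery — is the technical heart of the argument; a useful auxiliary point is that the conjugates of $\varepsilon$ fixing a given ample class of $Y$ form a finite set, which controls how many translates of the K3 fundamental domain can meet $V$.
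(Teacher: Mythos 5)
Your overall skeleton agrees with the paper's: reduce the theorem to showing that $\wnod\cdot\Aut(X)^{\ast}$ has finite index in $\OO^{+}(\mathrm{Num}(X))$, identify the nodal Weyl group of $X$ with the "equivariant" Weyl group generated by products $s_{C_1}s_{C_2}$ of reflections in the two components of $\pi^{\ast}C$ (your generator matching is exactly the paper's Lemma \ref{basic}), and then invoke the general theory of arithmetic groups acting on self-dual homogeneous cones (AMRT/Looijenga) to produce the rational polyhedral domain. The gap is in the step you yourself flag as the technical heart: you propose to deduce the finite-index statement for $X$ from Lieblich--Maulik's theorem on the cover $Y$ by a chamber argument ending with ``the Torelli-type input on $Y$ forces it to be realized, up to finite index, by an automorphism of $X$.'' No such Torelli-type input exists for K3 surfaces of finite height in odd characteristic --- this is precisely the obstruction the paper is organized around. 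Concretely, after your chamber manipulation you have an isometry $\phi$ of $\NS(Y)$ commuting with $\varepsilon^{\ast}$ and preserving $\Amp(Y)$, and you must produce $g\in\Aut(Y)$ with $g^{\ast}=\phi$ and $g\varepsilon g^{-1}=\varepsilon$ so that $g$ descends to $X$; even granting $\phi\in W_{Y}\Gamma_{Y}$ so that $\phi=g^{\ast}$ for some $g$, the relation $g^{\ast}\varepsilon^{\ast}=\varepsilon^{\ast}g^{\ast}$ only says that $g\varepsilon g^{-1}\varepsilon^{-1}$ acts trivially on $\NS(Y)$, which does not force $g$ to normalize $\langle\varepsilon\rangle$ without some control of the transcendental or period data --- data that is unavailable at finite height.

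The paper closes this gap by splitting into two cases. If $Y$ is supersingular, Ogus's crystalline Torelli theorem supplies exactly the missing realization statement, provided one works with the subgroup $\OO(L)^{++}$ of isometries extending to $\NS(Y)$ so as to preserve the crystalline period $\mathfrak{K}$ (and one checks via Lieblich--Maulik's Proposition 5.2 together with the elementary finite-index Lemma \ref{useful} that this subgroup still has finite index in $\OO(L)^{+}$). If $Y$ has finite height, the paper instead lifts the pair $(Y,\varepsilon)$ to characteristic $0$ using Jang's equivariant lifting theorem (the involution is tame since $p>2$, and fixed-point-freeness passes to the lift because the fixed locus is closed and misses the special fibre), obtains an Enriques surface $X_{0}$ over $\overline{K}$ with $\NS(X_{0})/\mathrm{torsion}\cong L$ and $\Aut(X_{0})^{\ast}\subseteq\Aut(X)^{\ast}$, and quotes Dolgachev's characteristic-$0$ theorem that $W_{0}\rtimes\Aut(X_{0})^{\ast}$ already has finite index in $\OO(L)^{+}$. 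To salvage your descent-from-$Y$ strategy you would have to either restrict to supersingular covers and add the period bookkeeping, or replace the ``Torelli-type input'' by such a lifting argument; as written, the finite-height case is not covered.
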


As a corollary of the cone theorem, we show the finiteness of smooth rational curves and elliptic fibrations up to automorphisms on an Enriques surface (see Corollary \ref{smoothrational} and \ref{ellipticpencil}). The corresponding result over complex number field was proved in \cite{Na85}.

The study of the automorphism groups of Calabi--Yau varieties is not only interrelated with the cone conjecture but also of independent interest. We refer to \cite{Do16} for a survey on the automorphism group of an Enriques surface. We prove here the following finiteness result, which is due to Dolgachev \cite{Do84} over the complex numbers (see also Corollary \ref{do3.45}).

\begin{theorem}\label{finiteness} Let $X$ be an Enriques surface over an algebraically closed field $k$ of odd characteristic. Then the automorphism group $\Aut(X)$ of $X$ is finitely generated.
\end{theorem}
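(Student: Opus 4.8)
The plan is to deduce Theorem~\ref{finiteness} from Theorem~\ref{cone} by the standard chamber--decomposition argument for a group acting on a convex cone with a rational polyhedral fundamental domain. Write $G := \Aut(X)$ and $n := \dim_{\RR}(\NS(X)\otimes\RR)$, and let $K$ be the kernel of the natural representation $G \to \GL(\NS(X))$. The first point is that $K$ is finite: an automorphism acting trivially on $\NS(X)$ preserves a very ample class and hence lies in a finite subgroup of the corresponding projective linear group, a fact which is classical for Enriques surfaces (cf.\ \cite{Do84, Do16}). Since an extension of a finitely generated group by a finite one is again finitely generated, it then suffices to produce a finite generating set of $G$ modulo $K$. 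Note also that $\Nef^{e}(X)$ has nonempty interior, as it contains the ample cone (on an Enriques surface a multiple of any ample class is effective by Riemann--Roch), so a fundamental domain for it is $n$-dimensional.

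By Theorem~\ref{cone} I would fix a rational polyhedral fundamental domain $\Pi$ for the action of $G$ on $\Nef^{e}(X)$, with facets $F_1,\dots,F_s$, and let $S$ be the set of $g\in G$ for which $g^{\ast}\Pi\cap\Pi$ has dimension $n-1$ or $n$. The claim is that $S$ is finite and generates $G$. For finiteness: if $g^{\ast}\Pi\cap\Pi$ is $n$-dimensional then its interior meets $\Pi^{\circ}$, so $g^{\ast}=\mathrm{id}$ and $g\in K$; if it has dimension exactly $n-1$, then since $g^{\ast}\Pi\cap\Pi^{\circ}$ is relatively open in $g^{\ast}\Pi$ yet contained in the $(n-1)$-dimensional set $g^{\ast}\Pi\cap\Pi$, it must be empty, so the intersection lies in $\partial\Pi=F_1\cup\dots\cup F_s$. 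A generic point $p$ of this intersection then lies in the relative interior of a unique $F_j$, and since the chambers $h^{\ast}\Pi$ ($h\in G$) cover $\Nef^{e}(X)$ with pairwise disjoint interiors, $g^{\ast}\Pi$ is forced to be the unique chamber lying across $F_j$ from $\Pi$ near $p$. Hence each $g\in S$ lies in one of at most $s$ cosets of $K$, so $|S|\le (s+1)\,|K|<\infty$.

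For generation, I would set $H:=\langle S\rangle$ and take $g\in G$. Choose a rational point $x$ in the interior of $\Pi$; since $\Nef^{e}(X)$ is convex, the segment from $x$ to $g^{\ast}x$ lies in $\Nef^{e}(X)$, and after a small perturbation fixing its endpoints (which are interior to their chambers) it meets only the open chambers together with their facets, crossing finitely many of these by local finiteness of the decomposition in the interior of $\Nef^{e}(X)$. Writing the successive chambers it traverses as $\Pi=g_0^{\ast}\Pi,\,g_1^{\ast}\Pi,\dots,g_m^{\ast}\Pi=g^{\ast}\Pi$, consecutive ones share a facet, so $(g_i^{-1}g_{i+1})^{\ast}\Pi\cap\Pi$ is $(n-1)$-dimensional and $g_i^{-1}g_{i+1}\in S$; telescoping gives $g\in H$, so $G=\langle S\rangle$ is finitely generated.

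I expect the genuine obstacle to lie entirely in Theorem~\ref{cone}, which is the input; granting it, the only points that require real care are the finiteness of the kernel $K$ and the local finiteness of the chamber decomposition in the interior of $\Nef^{e}(X)$, both of which are standard in this circle of ideas. A pleasant byproduct of this argument is that it yields an explicit finite set of "adjacent" generators of $\Aut(X)$, which should be useful in deriving the corollaries on $(-2)$-curves and elliptic fibrations.
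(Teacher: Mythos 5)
Your route is genuinely different from the paper's. The paper does not pass through Theorem~\ref{cone} at all: it observes that $\Aut(X)^{\ast}\cong G/W$, where $G=W\rtimes\Aut(X)^{\ast}$ has finite index in the arithmetic group $O(L)^{+}$ (Theorem~\ref{equivtorelli}(3)), invokes Borel--Harish-Chandra for finite generation of $O(L)^{+}$, passes to the finite-index subgroup $G$ by Proposition~\ref{group}, then to the quotient $G/W\cong \Aut(X)^{\ast}$, and finally to $\Aut(X)$ through the finite kernel. That argument is three lines and needs no chamber combinatorics. Your plan --- deducing finite generation from the cone theorem by an adjacency/path-crossing argument --- is a legitimate alternative in principle, and your ``generation'' half (perturb a segment between interior points of chambers so that it crosses only facets, then telescope) is the standard and correct argument, granted local finiteness of the tessellation on compact subsets of the ample cone.

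The gap is in the finiteness of $S$. Your bound $|S|\le (s+1)|K|$ rests on the claim that each facet $F_j$ of $\Pi$ determines \emph{one} neighboring chamber; but a tessellation by polyhedral fundamental domains need not be facet-to-facet, so a single facet can meet several distinct translates in $(n-1)$-dimensional pieces (a brick-wall tiling of the plane by unit squares has four facets per tile but six edge-adjacent neighbors). Your ``near $p$'' argument only gives uniqueness of the neighbor locally at $p$, not one neighbor per facet. More seriously, the facets of $\Pi$ are non-compact rational polyhedral cones whose closures can reach the isotropic boundary of $\Pos^{+}(X)$, and local finiteness of the tessellation is only available on compact subsets of the interior of the positive cone; near a rational isotropic ray of $\Pi$ infinitely many translates may accumulate, so finiteness of the set of facet-adjacent translates does not follow from the bare statement of Theorem~\ref{cone}. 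What rescues the argument is the Siegel property of the fundamental domain produced by \cite{AMRT10} (only finitely many $g$ with $g^{\ast}\Pi\cap\Pi\neq\{0\}$), which is part of that construction but strictly more than the fundamental-domain statement you quote. With that input your proof closes; without it, the finiteness of your generating set is unproved. (Your appeal to the finiteness of the kernel $K$ is fine and is also used by the paper.)
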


In the rest of this paper, we construct a smooth projective surface in odd characteristic with discrete and non-finitely generated automorphism group by blowing up a certain Enriques surface.

\begin{theorem}\label{nonfinite} Let $p > 3$ be a prime. Then for any algebraically closed field $\mathbf{K}$ containing $\mathbb{F}_p(t)$, there is a smooth projective surface $Y$ birational to some Enriques surface such that $\Aut (Y/\mathbf{K})$ is discrete but not finitely generated.
\end{theorem}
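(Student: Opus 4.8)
The plan is to take for $Y$ the blow-up $\beta\colon Y\to X$ of a carefully chosen Enriques surface $X$ over $\mathbf K$ at a suitable $\mathbf K$-rational point $x$ (the argument below is the same for a finite set of points), following the strategy used over $\mathbb C$ by Dinh and Oguiso to produce a surface with discrete, non-finitely-generated automorphism group, and checking that every step survives in characteristic $p>3$. The first point is that this reduces the problem to a statement about a point stabiliser: since $K_X\equiv 0$, adjunction forces the $\beta$-exceptional curve $E$ to be the \emph{unique} $(-1)$-curve on $Y$ --- any other would be the strict transform of an irreducible curve $C\subset X$ of multiplicity $m$ at $x$ with $C^2-m^2=-1$ and $1+\tfrac12 C^2-\binom m2=0$, and this system has no solution when $K_X\equiv 0$. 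Consequently every $\varphi\in\Aut(Y)$ preserves $E$, hence preserves the contraction $\beta$ and descends to an automorphism of $X$ fixing $x=\beta(E)$; conversely every automorphism of $X$ fixing $x$ lifts. Thus $\Aut(Y)\cong \operatorname{Stab}_{\Aut(X)}(x)$, and it remains to find $(X,x)$ with this stabiliser discrete but not finitely generated.

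Discreteness is the easy half. Pushing a global vector field forward along $\beta$ gives an injection $H^0(Y,T_Y)\hookrightarrow H^0(X,T_X)$, and the right-hand side vanishes because an Enriques surface in odd characteristic carries no nonzero global vector field; hence $\operatorname{Lie}\Aut_Y=0$, so the group scheme $\Aut_Y$ is \'etale over $\mathbf K$ and $\Aut(Y)$ is a countable discrete group. (This also follows at once from the displayed identification together with the discreteness of $\Aut(X)$.) In particular $\Aut_Y$ is a constant group scheme, so $\Aut(Y)$ is unchanged when $\mathbf K$ is enlarged; it therefore suffices to treat $\mathbf K=\overline{\mathbb F_p(t)}$.

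For the non-finite-generation one must choose $X$ and $x$ with care, and this is where the hypotheses $p>3$ and $\mathbb F_p(t)\subseteq\mathbf K$ are used. One fixes an Enriques surface $X$ over $\overline{\mathbb F_p(t)}$ --- either written down explicitly or obtained as the reduction of a complex example --- whose automorphism group, accessible through the K3 double cover and the lattice theory behind Theorems \ref{cone} and \ref{finiteness}, is as large as in the complex prototype: it should contain an infinite, explicitly presented subgroup acting on an infinite configuration of smooth rational curves $C_0,C_1,C_2,\dots$ (equivalently, on an infinite family of genus-one fibrations) in a controlled, ``ladder''-type fashion. The role of $p>3$ is to guarantee that this structure is not destroyed by bad reduction. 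One then chooses $x\in X(\overline{\mathbb F_p(t)})$ to be generic with respect to the countably many proper closed conditions one is forced to avoid, while still occupying a prescribed position relative to $\{C_i\}$; the transcendence of $t$ over $\mathbb F_p$ is exactly what makes such a point available, and is the reason the statement has no analogue over $\overline{\mathbb F}_p$.

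With $(X,x)$ so chosen, genericity of $x$ together with its prescribed position forces each element of $\operatorname{Stab}_{\Aut(X)}(x)\cong\Aut(Y)$ to act compatibly with the configuration $\{C_i\}$; filtering by ``complexity'' one writes this stabiliser as an increasing union $\bigcup_{n\ge 0}G_n$ of finitely generated subgroups, $G_n$ being those stabilising automorphisms supported on $C_0,\dots,C_n$. The heart of the argument --- and the step I expect to be the main obstacle --- is to prove this union strictly increasing, i.e. that no finite subset of $\Aut(Y)$ generates it; this is done by producing, for every $n$, an explicit automorphism of $X$ fixing $x$ that lies in $G_{n+1}\setminus G_n$. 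Pinning down $\operatorname{Stab}_{\Aut(X)}(x)$ precisely enough to see this filtration --- rather than a larger or a smaller group --- is what the $t$-dependent choice of $x$ and a detailed study of the $\Aut(X)$-action on $\{C_i\}$ are for. Combining the four steps yields a smooth projective surface $Y$ birational to the Enriques surface $X$ with $\Aut(Y/\mathbf K)$ discrete and not finitely generated.
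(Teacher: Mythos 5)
There is a genuine gap, and you have flagged it yourself: the step you call ``the heart of the argument'' --- exhibiting, for every $n$, an automorphism in $G_{n+1}\setminus G_n$ --- is exactly the content of the theorem, and no mechanism for producing such automorphisms is given. The paper's proof supplies a concrete one: $X$ is Mukai's Enriques quotient of $\mathrm{Km}(E\times F)$ with $E$ the Legendre curve of parameter $t$ and $F$ supersingular (so $E,F$ are non-isogenous and $\rho(\widetilde X)=18$); two genus-one fibrations on $X$ with reducible fibres of type $I_8$ and $IV^{\ast}$ yield, via translations by Mordell--Weil sections of their Jacobians together with an automorphism $f$ acting on the rational curve $H_2$ by $x\mapsto t^2x$, elements of the inertia group whose restrictions to $H_2$ are translations by $t^{-2n}a$ for all $n\ge 0$. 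The additive subgroup of $(\mathbf K,+)$ these generate is not finitely generated precisely because $t$ is transcendental over $\mathbb F_p$ --- that, not the availability of a ``generic'' point, is where the hypothesis $\mathbb F_p(t)\subset\mathbf K$ enters; and $p>3$ is used to exclude quasi-elliptic fibrations so that the two fibrations are honest elliptic fibrations, not to control ``bad reduction.'' Your filtration $\bigcup G_n$ by ``complexity'' has no counterpart to this, so non-finite generation is asserted rather than proved.

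Your reduction step is also insufficient as stated. A single blow-up gives $\Aut(Y)\cong\mathrm{Dec}(X,x)$, but the non-finitely generated group one can actually get hold of is the inertia group $\mathrm{Ine}(X,x,T_{x})$ of automorphisms fixing $x$ with trivial differential, and this is in general of \emph{infinite} index in $\mathrm{Dec}(X,x)$ (the differentials at $x$ realize infinite-order scalars such as $t^2$), so non-finite generation of the former says nothing about the latter. This is why the paper, following Keum--Oguiso, blows up $Q_{32}$ and then three further points on the exceptional divisor: the second round of blow-ups forces the differential to be scalar, and Proposition \ref{represent} (finiteness of the bicanonical representation, itself proved by lifting the finite-height K3-cover to characteristic $0$) then bounds that scalar, making $\mathrm{Ine}(X,Q_{32},T_{Q_{32}})$ of finite index in $\Aut(X_2)$ so that Proposition \ref{group} applies. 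Blowing up a single point, or any finite set of distinct points of $X$, does not achieve this.
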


Chronologically, Lesieutre \cite{Le18} gave the first example of a $6$-dimensional smooth projective variety whose automorphism group is discrete and non-finitely generated over a field of arbitrary characteristic. Later Dinh--Oguiso \cite{DO19} constructed a smooth complex projective surface with discrete and non-finitely generated automorphism group by blowing up a certain K3 surface. Oguiso \cite{Og19} then extended their work to odd characteristic. Very recently, Keum--Oguiso \cite{KO19} obtained new examples by blowing up some complex Enriques surface. We show that the same construction as in \cite{KO19} gives the desired example, which answers a question of \cite{KO19}.

\begin{remark} Replacing ``fibration'' by ``quasi-fibration'' as in \cite{Og19}, one should obtain the desired example by the same construction for $p = 3$. For simplicity, we omit this case here.
\end{remark}

The following result as a supplement to Theorem \ref{nonfinite} might be of independent interest (see \cite{Og19} for the case of K3 surfaces).

\begin{proposition}\label{finitefield} Let $p$ be an odd prime integer. Then for any smooth projective surface $Y$ birational to an Enriques surface over $\overline{\mathbb{F}}_p$ and for any field extension $\mathbf{L}/\overline{\mathbb{F}}_p$, the automorphism group $\Aut(Y_{\mathbf{L}}/\mathbf{L})$ is finitely generated.
\end{proposition}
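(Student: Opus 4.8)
The plan is to reduce the statement, in two steps, to the finite generation of $\Aut(S)$ for an Enriques surface $S$ (Theorem~\ref{finiteness}) \emph{over a finite field}, where a direct counting argument finishes it. First I would pass to the minimal model. Since $Y$ is birational to an Enriques surface it is non-ruled, has Kodaira dimension $0$, and satisfies $\chi(\mathcal{O}_Y)=1$; hence $Y$ has a unique minimal model $S$, which by the Bombieri--Mumford classification of surfaces is again an Enriques surface, and the birational morphism $\pi\colon Y\to S$ is canonical. Thus $\pi$ realises $Y$ as the blow-up $\mathrm{Bl}_{Z}S$ along the complete $0$-dimensional ideal $Z$, and since $Y$ and $S$ are defined over $\overline{\mathbb{F}}_p$, so are $\pi$ and $Z$. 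Functoriality of the minimal model gives a homomorphism $\rho\colon\Aut(Y_{\mathbf{L}}/\mathbf{L})\to\Aut(S_{\mathbf{L}}/\mathbf{L})$, which is injective: an automorphism of $Y_{\mathbf{L}}$ lying over $S_{\mathbf{L}}$ preserves the exceptional locus $E$ and is the identity on $Y_{\mathbf{L}}\setminus E\cong S_{\mathbf{L}}\setminus V(Z)$, hence on all of $Y_{\mathbf{L}}$. Its image is exactly the subgroup of $\Aut(S_{\mathbf{L}}/\mathbf{L})$ fixing $Z_{\mathbf{L}}$.

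Next I would remove the field $\mathbf{L}$. In odd characteristic $H^{0}(S,T_S)=0$, and a global vector field on $Y$ pushes forward to one on $S$, so $H^{0}(Y,T_Y)=0$ as well. The automorphism functor of $Y$ is representable by a group scheme $\underline{\Aut}(Y/\overline{\mathbb{F}}_p)$ locally of finite type over $\overline{\mathbb{F}}_p$ whose Lie algebra is $H^{0}(Y,T_Y)=0$; a locally finite type group scheme over a field with trivial Lie algebra is étale, so $\underline{\Aut}(Y/\overline{\mathbb{F}}_p)$ is a disjoint union of copies of $\Spec\overline{\mathbb{F}}_p$. Consequently $\Aut(Y_{\mathbf{L}}/\mathbf{L})=\underline{\Aut}(Y/\overline{\mathbb{F}}_p)(\mathbf{L})=\underline{\Aut}(Y/\overline{\mathbb{F}}_p)(\overline{\mathbb{F}}_p)=\Aut(Y/\overline{\mathbb{F}}_p)$ for every field extension $\mathbf{L}/\overline{\mathbb{F}}_p$, and it suffices to treat the case $\mathbf{L}=\overline{\mathbb{F}}_p$.

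Finally, the crux. By Theorem~\ref{finiteness}, $\Aut(S/\overline{\mathbb{F}}_p)$ is finitely generated; fixing finitely many generators, each defined over a finite field, I may choose a finite subfield $\mathbb{F}_q\subset\overline{\mathbb{F}}_p$ over which $S$, $Z$, and all of $\Aut(S/\overline{\mathbb{F}}_p)$ are defined, so that base change gives $\Aut(S/\overline{\mathbb{F}}_p)=\Aut(S_{\mathbb{F}_q}/\mathbb{F}_q)$. This group acts on the Hilbert scheme $S_{\mathbb{F}_q}^{[\ell]}$, with $\ell=\mathrm{length}(\mathcal{O}_S/Z)$, a projective $\mathbb{F}_q$-scheme, permuting the \emph{finite} set $S_{\mathbb{F}_q}^{[\ell]}(\mathbb{F}_q)$, which contains the point $[Z]$; moreover $g\in\Aut(S_{\mathbb{F}_q}/\mathbb{F}_q)$ lifts to an automorphism of $Y$ precisely when $g$ fixes $[Z]$, so $\rho(\Aut(Y/\overline{\mathbb{F}}_p))$ is the stabiliser of $[Z]$. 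As the orbit of $[Z]$ is finite, this stabiliser has finite index in $\Aut(S_{\mathbb{F}_q}/\mathbb{F}_q)=\Aut(S/\overline{\mathbb{F}}_p)$; being a finite-index subgroup of a finitely generated group it is finitely generated, hence so is $\Aut(Y/\overline{\mathbb{F}}_p)\cong\rho(\Aut(Y/\overline{\mathbb{F}}_p))$, and with it $\Aut(Y_{\mathbf{L}}/\mathbf{L})$.

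The main obstacle is precisely this last step, and it is the only place where the hypothesis ``over $\overline{\mathbb{F}}_p$'' is genuinely used: over a larger field the $\Aut(S)$-orbit of $[Z]$ on the Hilbert scheme can be infinite with non-finitely-generated stabiliser — this is exactly the mechanism behind Theorem~\ref{nonfinite} — whereas finite generation of $\Aut(S)$ (Theorem~\ref{finiteness}) allows one to descend to a finite field $\mathbb{F}_q$ and confine the orbit to the finite set $S_{\mathbb{F}_q}^{[\ell]}(\mathbb{F}_q)$. One should also record the standard facts that $\underline{\Aut}(Y)$ is representable and that every birational morphism $Y\to S$ of smooth projective surfaces is the blow-up of a $0$-dimensional subscheme, and keep track of the complete-ideal structure of $Z$ when it involves infinitely near points, but these are routine and present no real difficulty.
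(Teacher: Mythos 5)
Your proof is correct and takes essentially the same route as the paper: the paper's one-line proof just combines Theorem \ref{finiteness} with Lemma \ref{basechange} (the base-change step via discreteness of the automorphism scheme, which you justify by $H^0(Y,T_Y)=0$) and Lemma \ref{minimal} (the reduction to the minimal Enriques surface and the finite-index stabiliser of the blown-up cluster over a finite field, which the paper outsources to Oguiso's argument in [Og19]). You have simply written out in full the two lemmas the paper cites.
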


After some preliminaries in Section \ref{sec2}, we prove Theorem \ref{cone} and \ref{finiteness} in Section \ref{sec3}, following the argument in \cite[Section 1]{OS01}. We pass to the K3-cover of an Enriques surface. For supersingular K3-covers, we can use the crystalline Torelli theorem due to Ogus \cite{Og83} instead of the Hodge-theoretic Torelli theorem for complex K3 surfaces. For K3-covers of finite height, we use characteristic-$0$-liftings as in \cite{LM18} since there is no Torelli-type theorem available. The proof of Proposition \ref{finitefield} will be given at the end of Section \ref{sec3}. In Section \ref{sec4}, we prove Theorem \ref{nonfinite} by explicitly constructing a desired surface from a certain Enriques surface in characteristic $p > 3$.

\medskip\noindent{\bf Conventions.} Throughout this paper, we fix an algebraically closed field $k$ of odd characteristic $p>2$. For a variety $V$ defined over a field $\mathbf{K}$, we denote the group of biregular (resp. birational) automorphisms of $V$ (over $\mathbf{K}$) by $\Aut(V/\mathbf{K})$ (resp. $\Bir(V/\mathbf{K})$). We will omit $\mathbf{K}$ and write $\Aut(V)$ (resp. $\Bir(V)$) if there is no ambiguity. For closed subsets $W_1, W_2, \dots, W_n\subset V$, we denote the \textit{decomposition group} and the \textit{inertia group}, respectively, by
\[ \mathrm{Dec}(V, W_1, \dots,W_n) := \{f \in \Aut(V)\,|\, f(W_i) = W_i\ \mathrm{for\ all}\ i\}, \]
\[ \mathrm{Ine}(V, W_1, \dots,W_n) := \{f \in \mathrm{Dec}(V, W_1, \dots,W_n)\,|\,f|_{W_i} = \mathrm{id}_{W_i}\ \mathrm{for\ all}\ i\}. \]

\medskip\noindent{\bf Acknowledgements.} I thank Professor Keiji Oguiso for valuable suggestions. I also thank Professor Igor Dolgachev for his comments. Special thanks to the referee for pointing out some gaps with very constructive remarks. 
I am grateful to the University of Tokyo Special Scholarship for International Students.

\section{Preliminaries}\label{sec2}

Let $X$ be an Enriques surface over $k$ and let $\widetilde{X}$ be its universal cover. It is well-known that $\widetilde{X}$ is a K3 surface and we call it the \textit{K3-cover} of the Enriques surface $X$. Moreover, we have a fixed-point free involution $\theta$, called an \textit{Enriques involution}, such that $\widetilde{X}/\langle\theta\rangle = X$.

For a K3 surface $\widetilde{X}$ over $k$, consider the functor that associates to every local Artinian $k$-algebra $A$ with residue field $k$ the Abelian group
\[ \Phi^2_{\widetilde{X}/k}: A \to \Ker (H^2_{\acute{e}t}(\widetilde{X}\times_k \Spec\,A, \mathbb{G}_m) \to H^2_{\acute{e}t}(\widetilde{X}, \mathbb{G}_m)). \]
This functor is pro-representable by a $1$-dimensional formal group law $\widehat{\Br}(\widetilde{X})$, which is called the \textit{formal Brauer group} of $\widetilde{X}$. The height $h$ of the formal Brauer group $\widehat{\Br}(\widetilde{X})$ satisfies $1 \leq h \leq 10$ or $h = \infty$ (see \cite{Li16} for a survey).

\begin{definition} A K3 surface is called supersingular, if its formal Brauer group has infinite height.
\end{definition}

A K3 surfaces with maximal possible Picard number $22$ is supersingular. Conversely, using the solution to the Tate conjecture for supersingular K3 surfaces, one deduces that a supersingular K3 surface always has Picard number $22$. The discriminant of the intersection form on the N\'{e}ron--Severi group $\NS(\widetilde{X})$ of a supersingular K3 surface $\widetilde{X}$ is equal to $\mathrm{disc}\,\NS(\widetilde{X}) = -p^{2\sigma}$ for some integer $1 \leq \sigma \leq 10$. The integer $\sigma$ is called the \textit{Artin invariant} of the supersingular K3 surface. Ogus \cite{Og83} established a Torelli theorem for supersingular K3 surfaces with marked N\'{e}ron--Severi lattices in terms of crystalline cohomology.

We begin with the notion of the period for a marked supersingular K3 surface $(\tx, \eta)$. Let $\tx$ be a supersingular K3 surface with Artin invariant $\sigma$ and $\eta: N\to \NS(\tx)$ be an isometry, where $N$ is a fixed (supersingular) K3 lattice. The composite of $\eta$ and the Chern class map $c_{\mathrm{dR}}: \NS(\tx)\to H^2_{\mathrm{dR}}(\tx/k)$ gives a map $\bar{\eta}: N\otimes k\to H^2_{\mathrm{dR}}(\tx/k)$. It can be shown that $\Ker(\bar{\eta})\subseteq N_0\otimes k$, where $N_0:=pN^{\ast}/pN$ is a $2\sigma$-dimensional $\mathbb{F}_p$-vector space. Then we define the \textit{period}
\[ \mathfrak{K}:= \mathfrak{K}_{(\tx, \eta)}:= (\mathrm{id}_{N_0}\otimes F_k)^{-1}(\Ker(\bar{\eta}))\subset N_0\otimes k, \]
where $F_k: k\to k$ is the Frobenius.

For any element $b$ in a fixed K3 lattice $N$ satisfying $(b, b) = -2$, let $r_b$ be the reflection of $N$ given by $x\mapsto x + (x, b)\cdot b$, and let $W_N$ be the subgroup of isometries of $N$ generated by all of these $r_b$. We call $W_N$ the \textit{Weyl group} of the K3 lattice $N$.

\begin{theorem}[{crystalline Torelli theorem, \cite[Page 371]{Og83}}]\label{torelli} Let $(\tx, \eta)$ be a marked supersingular K3 surface and let $G_{\mathfrak{K}}$ be the subgroup of isometries of $\NS(\tx)$ preserving both the positive cone $\Pos(\tx)$ and the period $\eta(\mathfrak{K})$. Then the automorphism group $\Aut(\tx)$ equals the subgroup of $G_{\mathfrak{K}}$ consisting of elements that preserve the ample cone $\Amp(\tx)$. The Weyl group $W_{\widetilde{X}}:= W_{NS(\widetilde{X})}$ is a normal subgroup of $G_{\mathfrak{K}}$ and $G_{\mathfrak{K}} = W_{\tx} \rtimes \Aut(\tx)$.
\end{theorem}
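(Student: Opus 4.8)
The plan is to follow the Piatetski-Shapiro--Shafarevich strategy in its crystalline incarnation, splitting the argument into a root-theoretic part (the cone decomposition) and a Torelli part (lifting isometries to automorphisms). Write $\OO^{+}(\NS(\tx))$ for the group of isometries of $\NS(\tx)$ preserving the positive cone $\Pos(\tx)$, so that by definition $G_{\mathfrak{K}}$ is the subgroup of $\OO^{+}(\NS(\tx))$ that in addition preserves the period $\eta(\mathfrak{K})$.

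First I would establish the chamber decomposition. The $(-2)$-classes $\delta\in\NS(\tx)$ cut $\Pos(\tx)$ into chambers, and by the standard theory of hyperbolic reflection groups the Weyl group $W_{\tx}$ acts simply transitively on the set of these chambers; the ample cone $\Amp(\tx)$ is exactly the chamber cut out by the effective $(-2)$-classes. Consequently any $\phi\in\OO^{+}(\NS(\tx))$ carries $\Amp(\tx)$ to some chamber, and there is a unique $w\in W_{\tx}$ with $w\phi$ preserving $\Amp(\tx)$. This yields $\OO^{+}(\NS(\tx)) = W_{\tx}\rtimes \mathrm{Stab}(\Amp(\tx))$, with $W_{\tx}$ normal because conjugation sends $r_\delta$ to $r_{g\delta}$ and every element of $\OO^{+}$ permutes the roots.

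Next I would show $W_{\tx}\subseteq G_{\mathfrak{K}}$, i.e.\ that reflections preserve the period; this is where the structure of $\mathfrak{K}\subset N_0\otimes k$ with $N_0 = pN^{\ast}/pN$ enters. For $x\in pN^{\ast}$ and a $(-2)$-class $\delta$ one has $(x,\delta)\in p\ZZ$, hence $r_\delta(x) = x + (x,\delta)\delta \equiv x \pmod{pN}$, so $r_\delta$ acts trivially on $N_0$ and therefore fixes $\eta(\mathfrak{K})$. Thus $W_{\tx}\subseteq G_{\mathfrak{K}}$, and intersecting the previous decomposition with the period-preserving condition gives $G_{\mathfrak{K}} = W_{\tx}\rtimes\bigl(\mathrm{Stab}(\Amp(\tx))\cap G_{\mathfrak{K}}\bigr)$ with $W_{\tx}$ normal in $G_{\mathfrak{K}}$; the triviality of $W_{\tx}\cap \mathrm{Stab}(\Amp(\tx))$ follows from simple transitivity on chambers.

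Finally I would identify $\mathrm{Stab}(\Amp(\tx))\cap G_{\mathfrak{K}}$ with $\Aut(\tx)$. The inclusion $\supseteq$ is formal: for $g\in\Aut(\tx)$ the pullback $g^{\ast}$ preserves the intersection form and ampleness, hence $\Pos(\tx)$ and $\Amp(\tx)$, and it commutes with the Chern class map and the de Rham realization, hence preserves $\eta(\mathfrak{K})$. For $\subseteq$, given $\phi\in G_{\mathfrak{K}}$ preserving $\Amp(\tx)$ I would transport the marking to $\eta' = \eta\circ\phi^{-1}$; then $(\tx,\eta)$ and $(\tx,\eta')$ have the same period and matching ample cones, so the crystalline Torelli theorem produces an isomorphism realizing $\phi$, that is an automorphism $g$ with $g^{\ast}=\phi$, unique because an automorphism fixing $\NS(\tx)$ and the period acts trivially on crystalline $H^{2}$ and is therefore the identity. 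The main obstacle is precisely this last step: it is the substance of Ogus's crystalline Torelli theorem, resting on the deformation theory of supersingular K3 crystals and the surjectivity of the period map, and it is the deepest input, which I would invoke as a black box.
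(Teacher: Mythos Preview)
The paper does not prove this theorem; it is quoted as a result of Ogus \cite[Page~371]{Og83} and used throughout as a black box, so there is no in-paper argument to compare against. Your outline is in fact a faithful sketch of how the argument in \cite{Og83} goes: the hyperbolic reflection-group chamber decomposition giving $\OO^{+}(\NS(\tx)) = W_{\tx}\rtimes \mathrm{Stab}(\Amp(\tx))$, the observation that each $r_\delta$ acts trivially on $N_0 = pN^{\ast}/pN$ (this is precisely the Corollary on page~371 of \cite{Og83}, which the present paper also cites in the proof of Lemma~\ref{basic}), and then the appeal to the crystalline Torelli theorem proper to identify $\mathrm{Stab}(\Amp(\tx))\cap G_{\mathfrak{K}}$ with $\Aut(\tx)$. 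You correctly flag that this last step is the deep input and must be taken from Ogus; nothing in your sketch is wrong or missing relative to what the paper requires.
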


\begin{proposition}[{\cite[Proposition 5.2]{LM18}}]\label{lm5.2} Keep the same notations as above. Then $G_{\mathfrak{K}}$ is a subgroup of finite index of the group of isometries of $N$ preserving the positive cone $\Pos(\tx)$.
\end{proposition}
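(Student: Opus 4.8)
The plan is to exploit the fact that the period $\mathfrak{K}$ is a $k$-subspace of $N_0\otimes_{\mathbb{F}_p}k$, where $N_0=pN^{\ast}/pN$ is a \emph{finite} $\mathbb{F}_p$-vector space (of dimension $2\sigma$); consequently the constraint of preserving $\mathfrak{K}$, imposed on top of preserving $\Pos(\tx)$, can only cut down the isometry group by a finite-index amount. Write $\mathrm{O}^{+}(N)$ for the group of isometries of $N$ preserving the positive cone $\Pos(\tx)$. Identifying $\NS(\tx)$ with $N$ via the marking $\eta$, the subgroup $G_{\mathfrak{K}}$ is by definition contained in $\mathrm{O}^{+}(N)$, so it suffices to produce a finite-index subgroup of $\mathrm{O}^{+}(N)$ that is contained in $G_{\mathfrak{K}}$.

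First I would observe that every isometry $g$ of the lattice $N$ extends uniquely to $N\otimes\mathbb{Q}$, hence preserves the dual lattice $N^{\ast}$ and therefore also $pN^{\ast}$ and $pN$; so $g$ induces an $\mathbb{F}_p$-linear automorphism of $N_0$ (equivalently, $g$ acts on the discriminant group $N^{\ast}/N\cong N_0$). This gives a homomorphism $\rho\colon\mathrm{O}^{+}(N)\to\mathrm{Aut}_{\mathbb{F}_p}(N_0)\cong\mathrm{GL}_{2\sigma}(\mathbb{F}_p)$ whose target is a finite group, so $K:=\ker\rho$ has finite index in $\mathrm{O}^{+}(N)$. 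Next I would check that $K\subseteq G_{\mathfrak{K}}$: the action of $g\in\mathrm{O}^{+}(N)$ on the ambient space $N_0\otimes_{\mathbb{F}_p}k$ of the period is the extension of scalars $\rho(g)\otimes\mathrm{id}_k$, so if $g\in K$ then $g$ acts as the identity on $N_0\otimes k$ and in particular fixes the subspace $\mathfrak{K}$; since $g$ also preserves $\Pos(\tx)$, we get $g\in G_{\mathfrak{K}}$. Combining $K\subseteq G_{\mathfrak{K}}\subseteq\mathrm{O}^{+}(N)$ with $[\mathrm{O}^{+}(N):K]<\infty$ then gives the proposition.

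I do not expect a serious obstacle here; the one point that deserves care is the semilinearity bookkeeping — namely that the isometry group acts $k$-\emph{linearly} (not $F_k$-semilinearly) on $N_0\otimes k$, the Frobenius $F_k$ entering only in the definition of $\mathfrak{K}$ from $\Ker(\bar{\eta})$ and not in the $\mathrm{O}(N)$-action — so that an isometry acting trivially on the finite group $N_0$ is forced to act trivially on all of $N_0\otimes k$, hence on $\mathfrak{K}$. Once this, together with the (obvious) finiteness of $\mathrm{Aut}_{\mathbb{F}_p}(N_0)$, is recorded, the statement follows formally.
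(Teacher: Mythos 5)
Your argument is correct and complete. Note that the paper itself does not prove this proposition but simply cites \cite[Proposition 5.2]{LM18}; your reduction to the finiteness of $\mathrm{Aut}_{\mathbb{F}_p}(N_0)$ via the kernel of $\mathrm{O}^{+}(N)\to\mathrm{GL}_{2\sigma}(\mathbb{F}_p)$, together with the observation that the isometry action on $N_0\otimes k$ is the $k$-linear extension (so that acting trivially on $N_0$ forces fixing $\mathfrak{K}$), is exactly the argument given in that reference.
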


The following result is available for all K3 surfaces (see e.g., \cite[Proposition 1.10]{Og83}).

\begin{proposition}\label{action} Let $\widetilde{X}$ be a K3 surface over $k$. Then the nef cone $\Nef(\tx)$ is a fundamental domain for the action of $W_{\widetilde{X}}$ on the positive cone $\Pos(\tx)$.
\end{proposition}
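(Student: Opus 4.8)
The plan is to realize $\Nef(\tx)$ as the closure of a distinguished chamber in the hyperplane arrangement cut out on the positive cone by the $(-2)$-classes, and to show that $W_{\tx}$ permutes these chambers simply transitively. Work in $V:=\NS(\tx)\otimes\RR$ with its intersection form, which has signature $(1,\rho-1)$ by the Hodge index theorem, and for each $\delta\in\NS(\tx)$ with $\delta^2=-2$ set $H_\delta:=\delta^{\perp}\subset V$. First I would establish that the family $\{H_\delta\}$ is \emph{locally finite} on $\Pos(\tx)$: since $v^{\perp}$ is negative definite for every $v\in\Pos(\tx)$, a compactness estimate bounds the Euclidean norm of those $\delta$ with $H_\delta\cap K\neq\varnothing$, for each compact $K\subset\Pos(\tx)$, and there are only finitely many such lattice classes. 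Consequently the $H_\delta$ decompose $\Pos(\tx)$ into open \emph{chambers}, every point of $\Pos(\tx)$ lies in the closure of at least one chamber, and any two points of $\Pos(\tx)$ can be joined by a path meeting $\bigcup_\delta H_\delta$ only finitely often and transversally.

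Next I would pin down the chamber containing the ample classes. Riemann--Roch on the K3 surface $\tx$ shows that every $(-2)$-class is effective or anti-effective, so it pairs nontrivially with any ample class; hence $\Amp(\tx)\subset\Pos(\tx)$ meets no $H_\delta$. Conversely, combining the adjunction inequality $C^2\geq-2$ for integral curves with the Nakai--Moishezon criterion, a class $h\in\Pos(\tx)$ is already ample once $h\cdot\Gamma>0$ for every smooth rational curve $\Gamma$, i.e.\ for every integral $(-2)$-curve. It follows that $\Amp(\tx)$ is exactly one chamber $\mathcal{C}_0$: two points of a single chamber have the same sign against every $(-2)$-class, so if one of them is ample so is the other. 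Since $\Nef(\tx)=\overline{\Amp(\tx)}$ and the boundary of the open cone $\mathcal{C}_0$ lies in the nowhere dense set $\bigcup_\delta H_\delta$, we obtain $\Nef(\tx)=\overline{\mathcal{C}_0}$ and $\mathrm{int}\,\Nef(\tx)=\mathcal{C}_0$.

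It remains to show that $W_{\tx}$ acts simply transitively on chambers; note first that $W_{\tx}$ preserves $\Pos(\tx)$ (each generating reflection $r_\delta$ fixes a point of $\delta^{\perp}\cap\Pos(\tx)$) and permutes the $H_\delta$. Transitivity follows from the path argument above: crossing the walls $H_{\delta_1},\dots,H_{\delta_n}$ met by a generic path from $\mathcal{C}_0$ to a chamber $\mathcal{C}$ one at a time, the composite $r_{\delta_n}\cdots r_{\delta_1}\in W_{\tx}$ carries $\mathcal{C}_0$ to $\mathcal{C}$. For freeness of the action on chambers I would invoke the standard argument for a reflection group attached to a locally finite, invariant hyperplane arrangement: if $w\in W_{\tx}$ is nontrivial, then some $H_\delta$ separates $\mathcal{C}_0$ from $w(\mathcal{C}_0)$, so $w(\mathcal{C}_0)\neq\mathcal{C}_0$; since distinct chambers are disjoint, $w(\mathcal{C}_0)\cap\mathcal{C}_0\neq\varnothing$ forces $w=\mathrm{id}$. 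Assembling these facts: every point of $\Pos(\tx)$ lies in the closure of some chamber $w(\mathcal{C}_0)$, hence in $\bigcup_{w\in W_{\tx}}w(\Nef(\tx))$, while $\mathrm{int}\,\Nef(\tx)\cap w(\mathrm{int}\,\Nef(\tx))=\mathcal{C}_0\cap w(\mathcal{C}_0)$ is empty unless $w=\mathrm{id}$ --- which is precisely the assertion that $\Nef(\tx)$ is a fundamental domain for $W_{\tx}$ on $\Pos(\tx)$.

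The two delicate points are the local finiteness of the $(-2)$-walls (a Hodge-index plus compactness estimate) and, above all, the freeness of the $W_{\tx}$-action on chambers, where the combinatorial input on reflection groups enters; everything else is bookkeeping with Riemann--Roch and Nakai--Moishezon. One should also keep in mind that the irrational rays of $\partial\Pos(\tx)$ are not met by the chamber closures, so the statement is to be read for the open positive cone, or for its rational points, which is all that is needed in the sequel.
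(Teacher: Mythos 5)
Your argument is correct in substance, but note that the paper does not actually prove this proposition: it is quoted verbatim from Ogus (\cite[Proposition 1.10]{Og83}), and your proposal is essentially a reconstruction of the standard proof behind that citation. The three ingredients you isolate --- local finiteness of the $(-2)$-walls on $\Pos(\tx)$ via the Hodge index theorem, the identification $\Amp(\tx)=\mathcal{C}_0$ via Riemann--Roch, adjunction and Nakai--Moishezon, and simple transitivity of $W_{\widetilde{X}}$ on chambers --- are exactly the right ones, and your reading of ``fundamental domain on $\Pos(\tx)$'' in the last paragraph matches the intended meaning. Two small caveats. First, the freeness step (``some $H_\delta$ separates $\mathcal{C}_0$ from $w(\mathcal{C}_0)$ for $w\neq\mathrm{id}$'') is the genuinely nontrivial combinatorial input and is asserted rather than proved; this is acceptable here since it is precisely the content of Vinberg's theory of discrete reflection groups, which the paper itself invokes (via \cite{Vi71}) for the exactly analogous statement in Lemma 3.5(3), but you should cite it rather than call it ``the standard argument.'' Second, in the transitivity step the composite carrying $\mathcal{C}_0$ to $\mathcal{C}$ is most cleanly produced by induction on the number of wall crossings (reflecting the tail of the path back after each crossing), since the naive product $r_{\delta_n}\cdots r_{\delta_1}$ of the reflections in the walls as originally labelled requires conjugation bookkeeping; the statement that \emph{some} element of $W_{\widetilde{X}}$ carries $\mathcal{C}_0$ to $\mathcal{C}$ is all you need. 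With those points tightened, the proof is complete and is the expected one.
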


We next consider the lifting of a K3 surface of finite height to characteristic $0$. Let $R$ be a fixed  complete discrete valuation ring with residue field $k$ and fraction field $K$, e.g, $R = W(k)$ the Witt ring of $k$. We denote by $\kba$ a fixed algebraic closure of $K$.

\begin{theorem}\label{general} Let $\pi: \tfx \to \Spec\,R$ be a smooth proper relative K3 surface. If the specialization map $\mathrm{sp}: \NS(\tfx_{\kba}) \to \NS(\tfx_k)$ is an isomorphism, then it induces an isomorphism of nef cones $\Nef(\tfx_{\overline{K}}) \to \Nef(\tfx_k)$ respecting the ample cones. Moreover, there is an injective group homomorphism $\phi: \Aut(\tfx_{\kba}) \to \Aut(\tfx_k)$ such that $\spp$ is $\phi$-equivariant with respect to the natural pullback actions.
\end{theorem}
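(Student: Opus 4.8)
The plan is to compare the nef cones and automorphism groups of the geometric generic fiber $\tfx_{\kba}$ and the special fiber $\tfx_k$ via specialization, under the hypothesis that $\spp\colon \NS(\tfx_{\kba})\to\NS(\tfx_k)$ is an isomorphism. First I would recall that $\spp$ is always an injective homomorphism of lattices that preserves the intersection form (compatibility of cup products in $\ell$-adic cohomology with specialization), hence it induces an isometry $\NS(\tfx_{\kba})\otimes\RR\xrightarrow{\sim}\NS(\tfx_k)\otimes\RR$ which carries the positive cone $\Pos(\tfx_{\kba})$ to $\Pos(\tfx_k)$ (the two positive cones are distinguished by the sign of self-intersection, which is preserved, and one checks $\spp$ sends the component containing an ample class to the component containing an ample class, since ampleness specializes). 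The key point is then to show $\spp$ carries $\Amp(\tfx_{\kba})$ \emph{into} $\Amp(\tfx_k)$ and $\Nef(\tfx_{\kba})$ onto $\Nef(\tfx_k)$.

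The main step is the following: an integral class $D\in\NS(\tfx_{\kba})$ with $(D,D)\ge -2$ and $(D,H)>0$ for an ample $H$ is effective on $\tfx_{\kba}$ (Riemann--Roch on a K3 surface), and likewise on $\tfx_k$; moreover the set of $(-2)$-classes is preserved by $\spp$ since the intersection form is. The walls of $\Nef(\tfx_{\kba})$ inside $\Pos(\tfx_{\kba})$ are exactly the hyperplanes orthogonal to effective $(-2)$-classes, i.e. to classes $\delta$ with $(\delta,\delta)=-2$ that are effective; and $\Nef(\tfx_k)$ has the analogous wall-and-chamber description by Proposition \ref{action} together with Proposition \ref{lm5.2}-style reasoning (the Weyl group of $(-2)$-classes acts with the nef cone as a chamber). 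I would argue that a $(-2)$-class $\delta$ on $\tfx_{\kba}$ is effective if and only if $\spp(\delta)$ is effective on $\tfx_k$: one direction uses that effective divisors specialize to effective divisors (properness/flatness of $\pi$ and semicontinuity), so $\spp$ sends effective $(-2)$-classes to effective ones; for the converse, given $\delta$ with $(\delta,\delta)=-2$, exactly one of $\pm\delta$ is effective on each side (again Riemann--Roch, since $\chi(\OO(\delta))=2+(\delta,\delta)/2=1$, so $h^0(\delta)+h^0(-\delta)\ge 1$), and $\spp$ commutes with the sign, so effectivity is determined compatibly. Since $\spp$ is a surjective isometry preserving the positive cone and the set of effective $(-2)$-classes, it carries the chamber structure to the chamber structure, and in particular maps the chamber $\Nef(\tfx_{\kba})$ (the one containing ample classes) onto $\Nef(\tfx_k)$, matching ample interiors. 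I would double-check that ampleness is actually preserved, not just nefness: an ample class on $\tfx_{\kba}$ restricts to a class in $\Pos(\tfx_k)$ lying in the open chamber (no wall is crossed, because no effective $(-2)$-class becomes orthogonal to it), hence is ample on $\tfx_k$ by the Nakai--Moishezon-type characterization on K3 surfaces.

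Finally, for the automorphism statement: given $g\in\Aut(\tfx_{\kba})$, its action $g^{\ast}$ on $\NS(\tfx_{\kba})$ is an isometry preserving $\Amp(\tfx_{\kba})$; transporting via $\spp$ gives an isometry $\psi(g)$ of $\NS(\tfx_k)$ preserving $\Amp(\tfx_k)$ by the previous paragraph. To upgrade this lattice isometry to an actual automorphism of $\tfx_k$, I would invoke the Torelli-type input available for the special fiber — in the supersingular case, Theorem \ref{torelli} says an isometry preserving $\Pos$, the period $\eta(\mathfrak K)$, and $\Amp$ comes from an automorphism, so I must check $\psi(g)$ preserves the period $\mathfrak K$; this follows because $g$ acts on crystalline cohomology compatibly with the specialization isomorphism of crystalline cohomology groups and with the cohomology of the generic fiber, so it respects the sub-$F$-crystal defining $\mathfrak K$. (In the finite-height case this step is handled differently in the body of the paper; here I only claim injectivity and $\phi$-equivariance, and injectivity is immediate since $g\mapsto g^{\ast}$ is injective on a K3 surface and $\spp$ is injective.) Setting $\phi(g)$ to be the automorphism of $\tfx_k$ inducing $\psi(g)$ gives the homomorphism; $\phi$-equivariance of $\spp$ is exactly the identity $\spp\circ g^{\ast}=\phi(g)^{\ast}\circ\spp$, which holds by construction. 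The hard part, and the place I would be most careful, is verifying that the chamber decomposition genuinely transports — i.e. that no effective $(-2)$-class is created or destroyed under specialization — since this is what makes ampleness (not merely nefness and bigness) specialize; once that is pinned down, the rest is formal.
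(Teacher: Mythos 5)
The first half of your argument --- transporting the nef and ample cones via the wall-and-chamber structure cut out by effective $(-2)$-classes, using Riemann--Roch to see that effectivity of a $(-2)$-class is detected compatibly on both sides --- is sound and is essentially the content of \cite[Corollary 2.4]{LM18}, which is exactly what the paper cites for that part. The genuine gap is in the construction of $\phi$. You propose to transport $g^{\ast}$ to an isometry $\psi(g)$ of $\NS(\tfx_k)$ and then realize it as an automorphism of the special fiber by a Torelli theorem, invoking Theorem \ref{torelli} when $\tfx_k$ is supersingular and explicitly deferring the finite-height case. But under the hypotheses of Theorem \ref{general} the supersingular case never occurs: a supersingular special fiber has Picard number $22$, while $\tfx_{\kba}$ lives in characteristic $0$ and so has Picard number at most $20$, so $\spp$ could not be an isomorphism. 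The only case that matters is the finite-height one, for which no Torelli-type theorem is available (the introduction of the paper says this explicitly --- it is the very reason liftings are used), and for that case you construct nothing; so $\phi$ is never actually defined. The paper's proof, via \cite[Theorem 2.1]{LM18}, goes in the opposite direction: one spreads $g\in\Aut(\tfx_{\kba})$ out over (the integral closure of) $R$ and extends it to an automorphism of the whole family by a Matsusaka--Mumford-type argument, using $H^0(\tfx_k,T_{\tfx_k})=0$ to make the automorphism scheme $\underline{\Aut}_{\tfx/R}$ unramified over $R$; then $\phi(g)$ is the restriction to the special fiber, and the $\phi$-equivariance of $\spp$ is automatic. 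No Torelli input on the special fiber is needed or used.

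A second, smaller error: you assert that injectivity of $\phi$ is ``immediate since $g\mapsto g^{\ast}$ is injective on a K3 surface.'' The action of $\Aut$ on the N\'eron--Severi group is \emph{not} faithful in general --- for instance, the covering involution of a degree-$2$ K3 surface with Picard number $1$ acts trivially on $\NS$ --- and your construction only controls the action on $\NS$, not on all of $H^2$. Injectivity in \cite{LM18} again comes from the specialization argument (unramifiedness of the automorphism scheme), not from faithfulness of the $\NS$-representation.
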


\begin{proof} This follows from \cite[Corollary 2.4 and Theorem 2.1]{LM18} and the fact that $H^0(\tfx_k, T_{\tfx_k}) = 0$ for the K3 surface $\tfx_k$.
\end{proof}

For a K3 surface $\widetilde{X}$ over $k$, an automorphism $\alpha \in \Aut(\widetilde{X})$ is called \textit{tame} if $\alpha$ is of finite order and the order of $\alpha$ is not divisible by the base characteristic $p$. If $\widetilde{X}$ is of finite height, we call the orthogonal complement of the image of the Chern class map $c_{\mathrm{cris}}: \NS(\widetilde{X})\otimes W \hookrightarrow H^2_{\mathrm{cris}}(\widetilde{X}/W)$ the \textit{crystalline transcendental lattice} of $\widetilde{X}$ and denote it by $T_{\mathrm{cris}}(\widetilde{X})$. We denote the representation of
$\Aut(\widetilde{X})$ on $T_{\mathrm{cris}}(\widetilde{X})$ by $\chi_{\mathrm{cris},\widetilde{X}}: \Aut(\widetilde{X}) \to O(T_{\mathrm{cris}}(\widetilde{X}))$. An automorphism $\alpha \in \Aut(\widetilde{X})$ is called \textit{weakly tame} if the order of $\chi_{\mathrm{cris},\widetilde{X}}(\alpha)$ is not divisible by $p$. A tame automorphism is clearly weakly tame.

We say that an automorphism $\alpha \in \Aut(\widetilde{X})$ is \textit{liftable over the Witt ring} $W=W(k)$ if there is a proper scheme lifting $\widetilde{\mathfrak{X}} \to \Spec\,W$ of $\widetilde{X}$ and a $W$-automorphism $\mathfrak{a}: \widetilde{\mathfrak{X}} \to \widetilde{\mathfrak{X}}$ such that the restriction $\mathfrak{a}|_{\widetilde{X}}$ of $\mathfrak{a}$ to the special fiber is equal to $\alpha$. The pair $(\widetilde{\mathfrak{X}}, \mathfrak{a})$ is called a \textit{lifting} of the automorphism $\alpha$ of $\tx$. We have the following result due to Jang \cite[Theorem 3.2]{Ja17}.

\begin{proposition}\label{liftable} Let $\widetilde{X}$ be a K3 surface over $k$ of finite height and let $\alpha \in \Aut(\widetilde{X})$ be a weakly tame automorphism, then there is a N\'{e}ron--Severi group preserving lifting of $\alpha$. 
\end{proposition}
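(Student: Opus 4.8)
The plan is to realize a lifting of $\alpha$ as an $\alpha$-fixed $W$-point of the formal deformation space of $\widetilde{X}$ along which the whole N\'{e}ron--Severi lattice lifts, and then to algebraize. First I would set up the deformation theory. Since $H^{0}(\widetilde{X},T_{\widetilde{X}})=H^{2}(\widetilde{X},T_{\widetilde{X}})=0$ and $\dim_{k}H^{1}(\widetilde{X},T_{\widetilde{X}})=20$, the deformation functor of $\widetilde{X}$ over $W=W(k)$ is pro-represented by a formal scheme $\mathrm{Def}_{\widetilde{X}}\cong\mathrm{Spf}\,W[[t_{1},\dots,t_{20}]]$. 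Fix a $\ZZ$-basis $L_{1},\dots,L_{\rho}$ of $\NS(\widetilde{X})$; here $\rho:=\mathrm{rk}\,\NS(\widetilde{X})\le 22-2h\le 20$ since $\widetilde{X}$ has finite height $h\ge 1$. By the deformation theory of K3 surfaces together with a prescribed set of line bundles (Deligne; see \cite[Section 2]{LM18}), the closed formal subscheme $D\subseteq\mathrm{Def}_{\widetilde{X}}$ over which every class of $\NS(\widetilde{X})$ extends is formally smooth over $W$, of relative dimension $20-\rho\ge 0$; in particular $D$ is flat over $W$ and hence has $W$-points. The automorphism $\alpha$ acts on $\mathrm{Def}_{\widetilde{X}}$ by pullback of deformations along $\alpha$, and since $\alpha^{\ast}$ preserves the subgroup $\NS(\widetilde{X})$ while the classes extending over a fixed deformation form a subgroup, this action preserves $D$.

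The heart of the matter is to show that $\alpha$ acts on $D$ through a finite group of order prime to $p$. Let $n$ be the order of $\chi_{\mathrm{cris},\widetilde{X}}(\alpha)$, which is finite and prime to $p$ by weak tameness. The relative second crystalline cohomology of the universal family over $D$, with its Gauss--Manin connection and Hodge filtration, splits off the constant sub-crystal spanned by the horizontal Chern classes of $L_{1},\dots,L_{\rho}$; on the complementary constant crystal the Hodge filtration varies, and recording it gives a period map from $D$ to a period space $\mathcal{P}$. By the local Torelli theorem for K3 surfaces --- the injectivity of the Kodaira--Spencer/Gauss--Manin map, cf.\ \cite{Og83} and \cite[Section 2]{LM18} --- this period map is a closed immersion, and it is equivariant for the natural action of $\chi_{\mathrm{cris},\widetilde{X}}(\alpha)$ on $\mathcal{P}$. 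Since $\alpha$ acts on the complementary crystal through $\chi_{\mathrm{cris},\widetilde{X}}(\alpha)$, the automorphism $\alpha^{n}$ acts trivially there, hence fixes every Hodge filtration and therefore every period; by injectivity of the period map $\alpha^{n}|_{D}=\mathrm{id}$, so $\langle\alpha|_{D}\rangle$ is finite of order dividing $n$, hence prime to $p$. I expect this to be the main obstacle: passing from ``the tangent action of $\alpha$ on $D$ has order prime to $p$'', which is immediate from weak tameness, to ``$\alpha|_{D}$ itself has finite order prime to $p$''. Without this the only visible $\alpha$-fixed point of $D$ is the trivial lift, and it is precisely here that the genuinely crystalline input --- the $p$-adic local Torelli theorem for the N\'{e}ron--Severi-preserving deformations --- is needed.

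Granting this, the conclusion is standard. The order of $\langle\alpha|_{D}\rangle$ is a unit in $W$, so the fixed locus $D^{\alpha}\subseteq D$ is again formally smooth over $W$ (linearize and diagonalize $\alpha|_{D}$, using that $W=W(k)$ contains all $n$-th roots of unity via the Teichm\"{u}ller lift), and its special fibre contains the closed point; hence $D^{\alpha}$ has a $W$-point $d_{0}$. The point $d_{0}$ corresponds to a formal deformation $\widehat{\mathcal{X}}\to\mathrm{Spf}\,W$ of $\widetilde{X}$ equipped with a formal automorphism restricting to $\alpha$ on the special fibre. Since $d_{0}\in D$, an ample class on $\widetilde{X}$ lifts to $\widehat{\mathcal{X}}$, so by Grothendieck's existence theorem $\widehat{\mathcal{X}}$ algebraizes to a smooth projective family $\widetilde{\mathfrak{X}}\to\Spec\,W$ and the formal automorphism algebraizes to some $\mathfrak{a}\in\Aut(\widetilde{\mathfrak{X}}/W)$ with $\mathfrak{a}|_{\widetilde{X}}=\alpha$. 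Finally, $d_{0}\in D$ means every line bundle on $\widetilde{X}$ lifts, so the specialization map $\mathrm{sp}\colon\NS(\widetilde{\mathfrak{X}}_{\kba})\to\NS(\widetilde{X})$ is surjective; being also injective (\cite[Corollary 2.4]{LM18}), it is an isomorphism, and therefore $(\widetilde{\mathfrak{X}},\mathfrak{a})$ is a N\'{e}ron--Severi group preserving lifting of $\alpha$.
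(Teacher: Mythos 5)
The paper does not actually prove this proposition: it is imported wholesale as \cite[Theorem 3.2]{Ja17}, so there is no in-text argument to compare against. Your sketch is, in outline, a faithful reconstruction of Jang's strategy: pass to the formally smooth locus $D\subseteq\mathrm{Def}_{\widetilde{X}}$ of N\'eron--Severi-preserving deformations (smoothness of relative dimension $20-\rho$ is exactly where finite height is used, and you correctly flag $\rho\le 22-2h$), show that $\alpha$ acts on $D$ through a finite group of order prime to $p$, linearize to get a formally smooth fixed locus with a $W$-point, and algebraize via a lifted ample class. The closing steps (uniqueness of infinitesimal lifts of $\alpha$ from $H^0(T_{\widetilde{X}})=0$, Grothendieck existence, injectivity plus surjectivity of $\mathrm{sp}$) are all fine.

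The gap is concentrated in the step you yourself identify as the crux, and your justification of it does not yet close it. First, the citation is inapposite: the crystalline period description of deformations in \cite{Og83} is developed for \emph{supersingular} K3 surfaces, whereas here $\widetilde{X}$ has finite height; the statement that N\'eron--Severi-preserving deformations are faithfully recorded by the Hodge filtration on the constant crystal $H^2_{\mathrm{cris}}(\widetilde{X}/W)\otimes_W A$ in the finite-height case is the substantive input from Nygaard--Ogus theory, and it is essentially the whole content of Jang's proof rather than a black box one can wave at. Second, even granting an injective equivariant period map, the inference ``$\alpha^{n}$ acts trivially on $T_{\mathrm{cris}}(\widetilde{X})$, hence fixes every period'' needs care: the sum $c_{\mathrm{cris}}(\NS(\widetilde{X})\otimes W)+T_{\mathrm{cris}}(\widetilde{X})$ is only of finite index in $H^2_{\mathrm{cris}}(\widetilde{X}/W)$ in general, so over an Artinian $W$-algebra $A$ a line orthogonal to $\NS(\widetilde{X})$ need not lie in $T_{\mathrm{cris}}(\widetilde{X})\otimes_W A$ when $p$ divides the discriminant, and one must argue separately that $\alpha^{n}$ nevertheless fixes $F^2_A$ (weak tameness controls only the action on $T_{\mathrm{cris}}$, not on $\NS$). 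Until those two points are supplied --- or the step is honestly delegated to \cite[Theorem 3.2]{Ja17}, as the paper does --- the proof is a correct plan rather than a complete argument.
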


\begin{corollary}\label{liftfinite} Let $\widetilde{X}$ be a K3 surface over $k$ of finite height and let $\theta \in \Aut(\widetilde{X})$ be an Enriques involution, then there is a N\'{e}ron--Severi group preserving lifting $(\widetilde{\mathfrak{X}}, \Theta)$ of $\theta$ satisfying that  $\Theta$ is fixed-point free. 
\end{corollary}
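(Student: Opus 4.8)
The plan is to combine Proposition \ref{liftable} with a careful analysis of the fixed locus of the lifted involution. Since an Enriques involution $\theta$ is fixed-point free, in particular it acts on $H^0(\widetilde{X}, \omega_{\widetilde{X}})$ trivially (being a nowhere-vanishing section) and acts by $-1$ on $H^1(\widetilde{X}, \OOO_{\widetilde{X}})$ — more to the point, $\theta$ has order $2$, which is prime to $p$ since $p$ is odd, so $\theta$ is tame, hence weakly tame. Thus Proposition \ref{liftable} applies and gives a N\'eron--Severi group preserving lifting $(\widetilde{\mathfrak{X}}, \Theta)$ with $\Theta$ an involution of $\widetilde{\mathfrak{X}}$ over $W = W(k)$ restricting to $\theta$ on the special fiber. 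It remains to upgrade this to a \emph{fixed-point free} lifting.

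First I would observe that the fixed locus $\widetilde{\mathfrak{X}}^{\Theta}$ is a closed subscheme of $\widetilde{\mathfrak{X}}$, proper over $\Spec W$, and it is smooth over $\Spec W$ because $\Theta$ has order $2$ prime to the residue characteristic (the fixed locus of a tame finite group action on a smooth scheme is smooth). The key point is then that the special fiber of $\widetilde{\mathfrak{X}}^{\Theta}$ is exactly $\widetilde{X}^{\theta}$, which is empty by hypothesis. Since $\widetilde{\mathfrak{X}}^{\Theta} \to \Spec W$ is proper with empty special fiber, and $\Spec W$ is local with closed point corresponding to $k$, properness forces $\widetilde{\mathfrak{X}}^{\Theta}$ itself to be empty: a proper scheme over a local ring whose closed fiber is empty is empty. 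Hence $\Theta$ is fixed-point free on all of $\widetilde{\mathfrak{X}}$, as desired.

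The main obstacle, and the step requiring the most care, is justifying that the special fiber of the scheme-theoretic fixed locus $\widetilde{\mathfrak{X}}^{\Theta}$ coincides with $\widetilde{X}^{\theta}$ — i.e., that formation of the fixed locus commutes with base change to the closed point — together with the smoothness of the fixed locus needed to apply such a statement cleanly. This is where tameness of the involution (equivalently, $p$ odd) is essential: for a tame action the fixed-point scheme is smooth and its formation commutes with arbitrary base change, whereas in characteristic $2$ one would have to worry about non-reduced or jumping fixed loci. One should either cite the standard reference for fixed loci of tame group actions (e.g. Edixhoven's appendix, or \cite[Exp.\ X]{SGA3}) or, alternatively, argue more directly: since $p > 2$, one has $\OOO_{\widetilde{\mathfrak{X}}} = \OOO_{\widetilde{\mathfrak{X}}}^{+} \oplus \OOO_{\widetilde{\mathfrak{X}}}^{-}$ as $\Theta$-eigensheaves locally, the fixed locus is cut out by $\OOO_{\widetilde{\mathfrak{X}}}^{-}$, and the eigensheaf decomposition is compatible with base change because $2$ is a unit, so the fixed locus base-changes correctly. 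Either way, once the base-change statement is in hand, the emptiness of $\widetilde{X}^{\theta}$ propagates to $\widetilde{\mathfrak{X}}^{\Theta}$ and the corollary follows. I would also remark that one does not even need smoothness of the fixed locus for the final deduction: it suffices that the fixed locus is closed and proper over $W$ and that its closed fiber is the empty scheme, since a proper (indeed, any) scheme over $\Spec W$ with empty closed fiber is empty by Nakayama/support considerations — but smoothness makes the identification of the closed fiber with $\widetilde{X}^{\theta}$ transparent.
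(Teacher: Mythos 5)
Your argument is correct and follows essentially the same route as the paper: tameness of $\theta$ (since $p>2$) lets Proposition \ref{liftable} produce the lifting, and the $\Theta$-fixed locus, being closed in the proper $W$-scheme $\widetilde{\mathfrak{X}}$ and disjoint from the special fibre, must be empty. One quibble: your parenthetical claim that \emph{any} $W$-scheme with empty closed fibre is empty is false (e.g.\ the generic fibre $\Spec K$ alone), so properness is genuinely needed --- but you do invoke it in the main line of reasoning, and your extra care about scheme-theoretic fixed loci and base change, while harmless, is more than the set-theoretic statement the conclusion requires.
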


\begin{proof} Recall that the base characteristic is $p > 2$ by assumption, so an Enriques involution is tame. By Proposition \ref{liftable}, it remains to show that $\Theta$ is fixed-point free. This is because the set of $\Theta$-fixed points is closed in $\widetilde{\mathfrak{X}}/W$ and its intersection with the special fibre $\tx$ is empty.
\end{proof}

Finally, we note the following two purely group-theoretic facts (see e.g., \cite[Page 25 and 181]{Su82}) which will be used later.

\begin{lemma}\label{useful} Let $G$ be a group and let $H, K \subseteq G$ be two subgroups. If $H\subseteq G$ is of finite index, then $H\cap K\subseteq K$ is also of finite index.
\end{lemma}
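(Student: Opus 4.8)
The plan is to produce an injection from the set of left cosets $K/(H\cap K)$ into the set of left cosets $G/H$; since the latter is finite by hypothesis, this forces $[K:H\cap K]$ to be finite as well.

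First I would define the natural map $\Phi\colon K/(H\cap K)\to G/H$ by $\Phi(k(H\cap K)) = kH$ for $k\in K$. Well-definedness is immediate: if $k_1(H\cap K) = k_2(H\cap K)$, then $k_2^{-1}k_1\in H\cap K\subseteq H$, so $k_1 H = k_2 H$. For injectivity, suppose $k_1, k_2\in K$ with $k_1 H = k_2 H$; then $k_2^{-1}k_1\in H$, and since $k_2^{-1}k_1\in K$ as well (as $K$ is a subgroup), we get $k_2^{-1}k_1\in H\cap K$, i.e. $k_1(H\cap K) = k_2(H\cap K)$.

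Hence $[K:H\cap K]\le [G:H]<\infty$, which is the assertion. The argument is entirely formal, so there is no genuine obstacle; the only points requiring a line of verification are the well-definedness and injectivity of $\Phi$, both of which reduce to the definition of cosets together with the closure of $K$ under multiplication and inversion.
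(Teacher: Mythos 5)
Your proof is correct and is the standard coset-injection argument; the paper itself gives no proof of this lemma, simply citing it as a known group-theoretic fact from Suzuki's \emph{Group Theory I}, where essentially this same argument appears. In particular you even obtain the sharper quantitative bound $[K:H\cap K]\le [G:H]$, which is all that is needed.
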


\begin{proposition}\label{group} Let $G$ be a group and let $H \subseteq G$ be a subgroup. Assume that $H$ is of finite index. Then the group $H$ is finitely generated if and only if $G$ is finitely generated.
\end{proposition}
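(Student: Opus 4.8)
The plan is to prove both directions using the standard coset-representative argument for subgroups of finite index. Let $n := [G : H]$ and fix a set of coset representatives $g_1 = e, g_2, \dots, g_n \in G$, so that $G = \bigsqcup_{i=1}^n H g_i$.

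First I would treat the easy direction: if $H$ is finitely generated, then $G$ is finitely generated. Indeed, if $H = \langle h_1, \dots, h_m \rangle$, then I claim $G = \langle h_1, \dots, h_m, g_1, \dots, g_n \rangle$. This is immediate, since any $g \in G$ lies in some coset $H g_i$, hence $g = h g_i$ with $h \in H$ a word in the $h_j$, so $g$ is a word in the chosen generators. Thus $G$ is generated by $m + n$ elements.

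The converse is the substantive direction and is where I would invest the real work, via the Reidemeister--Schreier type argument. Suppose $G = \langle s_1, \dots, s_r \rangle$; I want to produce a finite generating set for $H$. For each $i \in \{1, \dots, n\}$ and each generator $s_k$, the element $g_i s_k$ lies in a unique coset, say $g_i s_k \in H g_{\sigma(i,k)}$ for some index $\sigma(i,k)$; set $t_{i,k} := g_i s_k g_{\sigma(i,k)}^{-1} \in H$. I claim $H$ is generated by the finite set $\{ t_{i,k} : 1 \le i \le n,\ 1 \le k \le r \}$ together with the finitely many elements $g_i s_k^{-1} g_{\sigma'(i,k)}^{-1}$ needed to handle inverses (or one may simply include $t_{i,k}^{-1}$, which suffices once one also allows $s_k^{-1}$ among the generators of $G$, so I would at the outset enlarge the generating set of $G$ to be symmetric). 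Given an arbitrary $h \in H$, write $h = s_{k_1}^{\pm 1} \cdots s_{k_\ell}^{\pm 1}$ and insert coset representatives between consecutive letters: starting from $g_1 = e$, after reading the prefix of length $j$ one is in the coset of some $g_{i_j}$, and the $j$-th letter contributes a factor of the form $t_{i_{j-1}, k_j}^{\pm 1}$ while the running representative updates from $g_{i_{j-1}}$ to $g_{i_j}$. Since $h \in H$, after reading all $\ell$ letters the running representative returns to $g_1 = e$, so the telescoping product expresses $h$ as a word in the $t_{i,k}^{\pm 1}$. Hence $H$ is finitely generated.

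The main obstacle — really the only place care is needed — is the bookkeeping in the converse: one must verify that the ``running coset representative'' is well-defined (which uses that the $Hg_i$ partition $G$) and that the telescoping identity $g_i s_k = t_{i,k}\, g_{\sigma(i,k)}$ genuinely collapses the inserted representatives, so that when $h \in H$ the terminal representative is forced to be $e$ and no stray $g_i$ survives. Everything else is formal. (Alternatively, one can cite \cite[Page 181]{Su82} directly, as the statement does; the above is the self-contained argument.)
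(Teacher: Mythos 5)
Your proof is correct: the easy direction via coset representatives and the converse via Schreier's lemma (telescoping in the elements $t_{i,k}=g_i s_k g_{\sigma(i,k)}^{-1}$ over a symmetrized generating set, with the terminal representative forced to be $g_1=e$ because $h\in H$) are both sound, and your handling of inverses by enlarging the generating set to be symmetric is the right fix. The paper itself gives no proof of this proposition --- it only cites Suzuki --- and the argument you supply is precisely the standard one found at the cited location, so there is nothing to compare beyond noting that your version is self-contained where the paper defers to a reference.
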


\section{Finiteness and the cone theorem}\label{sec3}

We turn to the relation between an Enriques surface and its K3-cover. Let us first fix the following notations. Recall that $X$ is an Enriques surface over $k$ and $\widetilde{X}$ is its K3-cover with the Enriques involution $\theta$.

\begin{notation} {\rm (1)} Let $\NS(\tx)$ be the N\'{e}ron--Severi lattice of $\tx$ and
\[ L := \{x\in \NS(\tx)\,|\,\theta^{\ast}x=x\}\cong \NS(X)/\mathrm{torsion}. \]

{\rm (2)} Let $\Pos(\tx)$ be the positive cone of $\tx$, i.e., the connected component of the space $\{x\in\NS(\tx)\otimes \mathbb{R}\,|\, (x, x) > 0\}$ containing the ample classes, and let $\Pos^{+}(\tx)$ be the union of $\Pos(\tx)$ and all $\mathbb{Q}$-rational rays in the boundary $\partial\,\Pos(\tx)$ of $\Pos(\tx)$ in $\NS(\tx)\otimes \mathbb{R}$.

\medskip {\rm (3)} Similarly define $\Pos(X)$ and $\Pos^{+}(X)$ for the Enriques surface $X$. Then $\Pos(X)$ is the $\theta$-invariant part of $\Pos(\tx)$, and $\CCp:= \Pos^{+}(X)$ is the $\theta$-invariant part of $\Pos^{+}(\tx)$.

\medskip {\rm (4)} Since any effective nef divisor on a Calabi--Yau surface is semi-ample, we find that $\Nef^{e}(\tx):= \Nef(\tx)\cap \mathrm{Eff}(\tx) = \Nef(\tx)\cap\Pos^{+}(\tx)$ and that $\AAA:= \Nef^{e}(X) = \Nef(X)\cap\Pos^{+}(X)$ is the $\theta$-invariant part of $\Nef^{e}(\tx)$.

The interior $\AAc$ of $\AAA$ consists of $\theta$-invariant ample classes of $\tx$ and is non-empty. Moreover, $\AAc$ is the $\theta$-invariant part of $\Nef^{e}(\tx)^{\circ}$.

\medskip {\rm (5)} Let $O(L)^{+}$ be the orthogonal group of the lattice $L$ preserving $\CCp = \Pos^{+}(X)$, and let $\Aut(X)$ be the automorphism group of $X$ which is isomorphic to $\{f\in\Aut(\tx)\,|\,f\circ\theta = \theta\circ f\}/\langle\theta\rangle$. There is a natural map $\Aut(X)\to O(L)^{+}$ with finite kernel. Let $\Aut(X)^{\ast}$ be the image of $\Aut(X)$ in $O(L)^{+}$. Then each element in $\Aut(X)^{\ast}$ preserves $\AAA$.

\medskip {\rm (6)} For a supersingular K3-cover $\tx$ with period $\mathfrak{K}$, we further denote by $O(L)^{++}$ the subgroup of $O(L)^{+}$ consisting of the elements of the form $\tau|_L$, where $\tau$ is an isometry of $\NS(\tx)$ such that $\tau\circ\theta^{\ast}=\theta^{\ast}\circ\tau$ and $\tau(\mathfrak{K})=\mathfrak{K}$. Note that such a $\tau$ always preserves $L$.
\end{notation}

\begin{lemma}\label{finite} The inclusion $O(L)^{++}\subseteq O(L)^{+}$ is of finite index for a supersingular K3 surface $\tx$.
\end{lemma}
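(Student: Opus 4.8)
The plan is to relate $O(L)^{++}$ and $O(L)^{+}$ to subgroups of the orthogonal group of the full N\'eron--Severi lattice $\NS(\tx)$, where Proposition \ref{lm5.2} gives us finite-index information. First I would consider the subgroup
\[
\Gamma := \{\tau \in O(\NS(\tx))^{+} \mid \tau\circ\theta^{\ast} = \theta^{\ast}\circ\tau\}
\]
of isometries of $\NS(\tx)$ that preserve the positive cone $\Pos(\tx)$ and commute with $\theta^{\ast}$, and its subgroup
\[
\Gamma_{\mathfrak{K}} := \{\tau \in \Gamma \mid \tau(\mathfrak{K}) = \mathfrak{K}\} = \Gamma \cap G_{\mathfrak{K}}.
\]
By Proposition \ref{lm5.2}, $G_{\mathfrak{K}}$ has finite index in $O(\NS(\tx))^{+}$, so by Lemma \ref{useful} the intersection $\Gamma_{\mathfrak{K}} = \Gamma \cap G_{\mathfrak{K}}$ has finite index in $\Gamma$. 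Since any $\tau \in \Gamma$ preserves $L$ (it commutes with $\theta^{\ast}$, hence preserves its $+1$-eigenlattice), restriction to $L$ gives a homomorphism $\rho: \Gamma \to O(L)^{+}$ whose image is exactly $O(L)^{+}$ and which carries $\Gamma_{\mathfrak{K}}$ onto $O(L)^{++}$, both essentially by the definitions in Notation (5) and (6). A homomorphic image of a finite-index subgroup is a finite-index subgroup of the image, so $O(L)^{++} = \rho(\Gamma_{\mathfrak{K}})$ has finite index in $O(L)^{+} = \rho(\Gamma)$, which is the claim.

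Let me spell out the one point that needs care: that restriction $O(\NS(\tx)) \supseteq \Gamma \to O(L)^{+}$ really is \emph{surjective} onto $O(L)^{+}$. Given $g \in O(L)^{+}$, by definition $g$ comes from an automorphism of $X$, which lifts to an automorphism $f$ of $\tx$ commuting with $\theta$; then $f^{\ast} \in \Gamma$ and $f^{\ast}|_{L} = g$ (up to the finite ambiguity coming from $\Aut(X) \to \Aut(X)^{\ast}$, which does not affect surjectivity). Wait --- I should double-check whether $O(L)^+$ as defined in Notation (5) is literally $\Aut(X)^{\ast}$ or the possibly larger abstract orthogonal group; rereading, $O(L)^{+}$ is the \emph{full} orthogonal group of $L$ preserving $\CCp$, and $\Aut(X)^{\ast}$ is merely a subgroup of it. So surjectivity of $\rho$ onto $O(L)^{+}$ is not automatic from automorphisms; instead I would argue directly: by the theory of the discriminant form, every isometry of $L$ preserving $\CCp$ extends to an isometry of $\NS(\tx) = L \oplus^{?} M$ commuting with $\theta^{\ast}$ --- more precisely, one uses that $\NS(\tx)$ sits inside $L \oplus M$ (with $M$ the $(-1)$-eigenlattice of $\theta^{\ast}$) as the preimage of the graph of an anti-isometry of discriminant forms, so an isometry of $L$ glues with a suitable isometry of $M$ to an isometry of $\NS(\tx)$ exactly when it acts compatibly on the glue, and finite index is all we need, so passing to the finite-index subgroup acting trivially on the discriminant form suffices.

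The main obstacle I anticipate is exactly this gluing/extension step: showing that, up to finite index, every element of $O(L)^{+}$ is the restriction of an isometry of $\NS(\tx)$ commuting with $\theta^{\ast}$ and preserving the period $\mathfrak{K}$. The commuting-with-$\theta^{\ast}$ part is handled by the standard lattice-theoretic gluing of $\theta^{\ast}$-eigenlattices along discriminant forms (and the finite-index subgroup of $O(L)$ acting trivially on $L^{\ast}/L$ automatically extends). The period-preservation part is then free, because $O(L)^{++}$ is defined precisely by intersecting with the period stabilizer on the $\NS(\tx)$-side, so once the extension to $\Gamma$ is in hand, Proposition \ref{lm5.2} and Lemma \ref{useful} finish the argument without further work. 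An alternative, perhaps cleaner, route avoiding an explicit gluing statement: work throughout with the finite-index subgroup $O(L)^{+}_{0} \subseteq O(L)^{+}$ of isometries acting trivially on the discriminant group $L^{\ast}/L$, show $O(L)^{++} \cap O(L)^{+}_{0}$ has finite index in $O(L)^{+}_{0}$ by the eigenlattice argument above, and conclude by Lemma \ref{useful} applied twice.
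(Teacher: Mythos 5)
Your proposal follows the same basic strategy as the paper's proof, which is a one\hyp{}liner: it asserts the identity $O(L)^{++} = O(L)^{+}\cap G_{\mathfrak{K}}$ and then invokes Proposition \ref{lm5.2} together with Lemma \ref{useful}. The difference is that you correctly notice, and then address, the point that this terse argument leaves implicit: $G_{\mathfrak{K}}$ lives inside the isometry group of the full lattice $\NS(\tx)$, whereas $O(L)^{+}$ is the full orthogonal group of the rank\hyp{}$10$ invariant sublattice $L$, so transferring the finite\hyp{}index statement of Proposition \ref{lm5.2} across the restriction map $\rho\colon\Gamma\to O(L)^{+}$ requires knowing that $\rho$ hits a finite\hyp{}index subgroup of $O(L)^{+}$. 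Your self\hyp{}correction on this point is exactly right: surjectivity is not automatic from the definitions, and the fix you sketch is the standard one --- the finite\hyp{}index subgroup of $O(L)^{+}$ acting trivially on the discriminant group $L^{\ast}/L$ extends to $\NS(\tx)$ by gluing with the identity on the negative\hyp{}definite $(-1)$\hyp{}eigenlattice $M=L^{\perp}$, and any such extension automatically commutes with $\theta^{\ast}$ and preserves $\Pos(\tx)$. Combined with the observation that $\rho$ carries $\Gamma\cap G_{\mathfrak{K}}$ onto $O(L)^{++}$ and that homomorphic images preserve finite index, this closes the argument. In short: same route as the paper, but you supply a genuinely needed extension step that the paper's proof elides; your version is the more complete of the two.
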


\begin{proof} The assertion follows from the fact that $O(L)^{++} = O(L)^{+}\cap G_{\mathfrak{K}}$, Proposition \ref{lm5.2}, and Lemma \ref{useful}.
\end{proof}

Let $\Nod(\tx) := \{ [C]\in \NS(\tx)\, |\, \PP^1 \cong C \subset \tx \}$ be the set of nodal classes of $\tx$. Set $\NN:= \{ b \in \Nod(\tx) \ \mathrm{satisfying} \ (b, \theta^{\ast}(b)) = 0 \}$. For each $b\in\NN$, we define                                      \[ R_b := r_b \circ r_{\theta^{\ast}(b)}, \]
where $r_b$ is the reflection associated with the nodal class $b$.

\begin{lemma}\label{obv} If $b \in \NN$, then
\[ R_b(x) = x + (x, b)\cdot b + (x, \theta^{\ast}(b))\cdot \theta^{\ast}(b) \]
for each $x\in \NS(\tx)$. In particular, $R_b = r_b\circ r_{\theta^{\ast}(b)} = r_{\theta^{\ast}(b)}\circ r_b$ and $R_b^2 = \mathrm{id}$.
\end{lemma}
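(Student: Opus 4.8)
The plan is a direct computation, the whole point being that the defining condition $(b,\theta^{\ast}(b)) = 0$ of $\NN$ makes all cross terms vanish. First I would record the two ingredients needed. The reflection $r_c$ is given by $r_c(x) = x + (x,c)\cdot c$ for any $c\in\NS(\tx)$ with $(c,c) = -2$; and since $\theta^{\ast}$ is an isometry of $\NS(\tx)$ (being induced by the automorphism $\theta$ of $\tx$), the class $\theta^{\ast}(b)$ again satisfies $(\theta^{\ast}(b),\theta^{\ast}(b)) = (b,b) = -2$, so $r_{\theta^{\ast}(b)}$ is defined by the same formula. In particular $\theta^{\ast}(b)$ need not itself be invoked as a nodal class; only its self-intersection matters.

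Next I would expand $R_b = r_b\circ r_{\theta^{\ast}(b)}$ on an arbitrary $x\in\NS(\tx)$: applying $r_{\theta^{\ast}(b)}$ first produces $x + (x,\theta^{\ast}(b))\cdot\theta^{\ast}(b)$, and then applying $r_b$ and using $(\theta^{\ast}(b),b) = 0$ to kill the resulting cross term yields exactly $x + (x,b)\cdot b + (x,\theta^{\ast}(b))\cdot\theta^{\ast}(b)$, the claimed formula. Running the same computation with the two reflections in the opposite order gives the identical expression; this establishes $r_b\circ r_{\theta^{\ast}(b)} = r_{\theta^{\ast}(b)}\circ r_b$ (equivalently, one may simply cite that two reflections in orthogonal roots commute).

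Finally, for $R_b^2 = \mathrm{id}$ I would either note that the product of two commuting involutions has order dividing $2$, or iterate the explicit formula directly: setting $y = R_b(x)$, the relations $(b,b) = (\theta^{\ast}(b),\theta^{\ast}(b)) = -2$ and $(b,\theta^{\ast}(b)) = 0$ give $(y,b) = -(x,b)$ and $(y,\theta^{\ast}(b)) = -(x,\theta^{\ast}(b))$, whence $R_b(y) = x$. There is essentially no obstacle here; the only points requiring a word of care are that $\theta^{\ast}(b)$ has square $-2$ (automatic, since $\theta^{\ast}$ is an isometry) and that the mixed pairing $(b,\theta^{\ast}(b))$ vanishes, which is precisely the condition defining membership in $\NN$.
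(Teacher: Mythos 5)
Your proposal is correct and follows essentially the same route as the paper: a direct expansion of $r_b\circ r_{\theta^{\ast}(b)}$ using $(b,\theta^{\ast}(b))=0$ to kill the cross term, the same computation in the opposite order for commutativity, and $R_b^2=\mathrm{id}$ from the two commuting involutions. The added remarks (that $\theta^{\ast}(b)$ has square $-2$ because $\theta^{\ast}$ is an isometry, and the alternative direct verification of $R_b^2=\mathrm{id}$) are fine but not needed beyond what the paper does.
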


\begin{proof} By a direct computation and the assumption that $(b, \theta^{\ast}(b)) = 0$,
\[ R_b(x) = r_b\circ r_{\theta^{\ast}(b)}(x) = r_b(x + (x, \theta^{\ast}(b))\cdot \theta^{\ast}(b)) \]
\[ = x + (x, b)\cdot b + (x, \theta^{\ast}(b))\cdot \theta^{\ast}(b) = r_{\theta^{\ast}(b)}\circ r_b(x). \]
Since $r_b^2 = r_{\theta^{\ast}(b)}^2 = \mathrm{id}$, we know that $R_b^2 = \mathrm{id}$.
\end{proof}

\begin{definition} The \textit{equivariant Weyl group} $W$ for the pair $(\tx, \theta)$ or for the Enriques surface $X$, is defined by $W:= \langle R_b\,|\,b\in\NN \rangle$, which is a subgroup of $O(\NS(\widetilde{X}))$.
\end{definition}

\begin{lemma}\label{basic} {\rm (1)} $W\subseteq O(L)^{+}$. More precisely, for any $\sigma\in W$, the restriction $\sigma|_L$ is in $O(L)^{+}$ and if $\sigma|_L = \mathrm{id}$, then $\sigma = \mathrm{id}$. If in addition, $\tx$ is supersingular, then $W\subseteq O(L)^{++}$.

\medskip {\rm (2)} Let $\wnod$ be the subgroup of $O(L)^{+}$ generated by reflections associated with classes of nodal curves (i.e., $(-2)$-curves) on $X$. Then $W \cong \wnod$.

\medskip {\rm (3)} The cone $\AAA$ is a fundamental domain for the action of $W$ on $\CCp$, i.e., $W\cdot\AAA = \CCp$ and $\sigma(\AAc)\cap\AAc \neq \varnothing$ if and only if $\sigma = \mathrm{id}$.

\end{lemma}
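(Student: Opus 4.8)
The plan is to handle the three parts in order, using the relationship between the equivariant Weyl group $W$ on $\NS(\tx)$ and its restriction to $L$, together with the known structure of the Weyl group of the K3-cover from Proposition \ref{action} and the Torelli theorem.

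For part (1): I would first check that each generator $R_b = r_b \circ r_{\theta^\ast(b)}$ commutes with $\theta^\ast$, hence preserves $L$; this is immediate from Lemma \ref{obv} together with $\theta^\ast(\theta^\ast(b)) = b$. So $\sigma|_L$ makes sense for $\sigma \in W$. To see $\sigma|_L \in O(L)^+$, note that $\sigma$ is a composition of $(-2)$-reflections on $\NS(\tx)$, hence lies in the Weyl group $W_{\tx}$, which by Proposition \ref{action} acts on $\Pos(\tx)$ with fundamental domain $\Nef(\tx)$; in particular $\sigma$ preserves $\Pos(\tx)$, and therefore $\sigma|_L$ preserves $\Pos(X) = \Pos(\tx)^{\theta}$ and its rational closure $\CCp$. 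For the injectivity statement $\sigma|_L = \mathrm{id} \Rightarrow \sigma = \mathrm{id}$: if $\sigma|_L = \mathrm{id}$ then $\sigma$ fixes the $\theta$-invariant ample classes pointwise, so it fixes a point in the interior of $\Nef(\tx)$; since $\Nef(\tx)$ is a fundamental domain for $W_{\tx}$ on $\Pos(\tx)$ and $\sigma \in W_{\tx}$, this forces $\sigma = \mathrm{id}$. Finally, if $\tx$ is supersingular, each $R_b$ is built from reflections in classes of $\NS(\tx)$ and commutes with $\theta^\ast$, and reflections in nodal classes preserve the period $\mathfrak{K}$ (they lie in $G_{\mathfrak{K}}$ by the crystalline Torelli theorem, since $r_b$ preserves $\Pos(\tx)$ and $\mathfrak{K}$ is intrinsic to the lattice embedding); hence $R_b|_L \in O(L)^{++}$, and so $W \subseteq O(L)^{++}$.

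For part (2): I would construct the isomorphism $W \cong \wnod$ by sending a generator $R_b$, $b \in \NN$, to the reflection $r_{\bar b}$ associated with the class $\bar b \in L$ of the image in $X$ of the $(-2)$-curve on $\tx$ corresponding to $b$. The key geometric input is that the $(-2)$-curves on $X$ pull back under the K3-cover to pairs $\{C, \theta^\ast C\}$ of disjoint $(-2)$-curves on $\tx$ (disjointness is exactly the condition $(b, \theta^\ast b) = 0$ defining $\NN$; this uses that $\theta$ is fixed-point free), and that the class of the image curve in $L$ is, up to a factor of $2$ in the self-intersection coming from the degree-$2$ cover, matched with $b + \theta^\ast(b)$ or with $b$ restricted to $L$. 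One checks via Lemma \ref{obv} that $R_b|_L$ acts on $L$ exactly as the reflection in this nodal class of $X$. Surjectivity onto $\wnod$ is then the statement that every $(-2)$-curve on $X$ arises this way, which is standard for Enriques surfaces in characteristic $\neq 2$; injectivity follows from part (1).

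For part (3): this should follow by descent from Proposition \ref{action} applied to $\tx$. Since $\Nef(\tx)$ is a fundamental domain for $W_{\tx}$ acting on $\Pos(\tx)$, and $\AAA = \Nef^e(\tx)^{\theta}$, $\CCp = \Pos^+(\tx)^{\theta}$, I would take a $\theta$-invariant class $v \in \CCp$, move it into $\Nef(\tx)$ by some $w \in W_{\tx}$, and then average or symmetrize to produce an element of the equivariant Weyl group $W$ doing the job — the point being that the chamber structure on $\Pos(\tx)$ cut out by nodal classes is $\theta$-stable, so the walls met along the way come in $\theta$-pairs and the required composition of reflections is of the form $\prod R_{b_i}$. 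For the "fundamental domain" clause, $\sigma(\AAc) \cap \AAc \neq \varnothing$ with $\sigma \in W \subseteq W_{\tx}$ forces $\sigma(\Nef(\tx)^{\circ}) \cap \Nef(\tx)^{\circ} \neq \varnothing$ (as the interiors of the $\theta$-invariant cones lie in the interiors of the ambient ones), hence $\sigma = \mathrm{id}$ by Proposition \ref{action}.

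The main obstacle I expect is part (3), specifically the claim that an element of $W_{\tx}$ carrying a $\theta$-invariant class into $\Nef(\tx)$ can be replaced by an element of the \emph{equivariant} group $W$: one must argue carefully that the sequence of wall-crossings can be organized $\theta$-equivariantly, i.e., that whenever a wall $b^{\perp}$ is crossed, the wall $\theta^\ast(b)^{\perp}$ is crossed "simultaneously", so that the net transformation is a product of the $R_b$. This requires knowing that $b$ and $\theta^\ast(b)$ are orthogonal for the relevant nodal classes — which is guaranteed by the fixed-point-freeness of $\theta$ via the argument that adjacent $(-2)$-curves in the K3-cover cannot be swapped by a fixed-point-free involution — and a connectedness/galleries argument in the Weyl chamber decomposition. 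The second likely friction point is pinning down the precise lattice-theoretic dictionary in part (2) between nodal classes on $X$ and $\theta$-orbits of nodal classes on $\tx$, including the behavior of the intersection form under pushforward; this is routine but needs the characteristic $\neq 2$ hypothesis to ensure the cover is étale and the curve-correspondence is as described.
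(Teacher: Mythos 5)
Your treatment of parts (1) and (2) matches the paper's: commutation of $R_b$ with $\theta^{\ast}$ gives $R_b(L)=L$ and $R_b\in O(L)^{+}$; the injectivity of $\sigma\mapsto\sigma|_L$ on $W$ follows from Proposition \ref{action} exactly as you say; and the supersingular refinement comes from $W_{\tx}\subseteq G_{\mathfrak{K}}$ (Theorem \ref{torelli}), i.e.\ from Ogus's fact that $(-2)$-reflections preserve the period. Your dictionary in (2) between nodal curves on $X$ and elements of $\NN$ (the preimage of a $\PP^1$ under the \'etale double cover must be two disjoint $\PP^1$'s) is the content the paper leaves implicit, and it is correct.

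Part (3) is where your route diverges and where the gap lies. You propose to descend from Proposition \ref{action}: move a $\theta$-invariant class into $\Nef(\tx)$ by some $w\in W_{\tx}$ and then ``average or symmetrize'' so that the net transformation is a product of the $R_b$. You correctly identify this as the main obstacle, but the justification you offer for it does not work: it is \emph{not} true that fixed-point-freeness of $\theta$ forces $(c,\theta^{\ast}c)=0$ for every nodal class $c$ on $\tx$. Fixed-point-freeness only rules out $\theta(C)=C$ and forces the intersection number $(c,\theta^{\ast}c)$ to be even (the finite set $C\cap\theta(C)$ is $\theta$-stable and contains no fixed point); pairs with $(c,\theta^{\ast}c)\geq 2$ genuinely occur, they simply contribute no wall inside $\Pos(X)$ because then $(c+\theta^{\ast}c)^2\geq 0$. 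So the wall-crossings in $W_{\tx}$ separating $x$ from $\Nef(\tx)$ cannot in general be paired off into factors $R_b=r_b r_{\theta^{\ast}b}$ with $b\in\NN$, and the equivariant-gallery argument is left without a proof. The paper avoids all of this: using $(x,b)=(x,\theta^{\ast}b)$ for $x\in L_{\RR}$, one computes $R_b(x)=x+(x,b)(b+\theta^{\ast}b)$, so $R_b|_{L_{\RR}}$ is itself a reflection of $L_{\RR}$ in the hyperplane $b^{\perp}$. Hence $W|_L$ is a discrete group generated by reflections in hyperplanes of the Lobachevsky space $(\CCp)^{\circ}/\RR_{>0}$, and Vinberg's theorem \cite{Vi71} says directly that the closure of a chamber, namely $\AAA=\{x\in\CCp\mid (x,b)\geq 0\ \text{for all}\ b\in\NN\}$, is a fundamental domain. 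No descent from $W_{\tx}$ is needed, and you should replace your sketch of (3) by this intrinsic reflection-group argument.
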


\begin{proof} By Lemma 3.3 and a direct calculation, we see that $\theta^{\ast}\circ R_b =R_b\circ\theta^{\ast}$ and hence $R_b(L) = L$. Since $R_b$ preserves $\Pos^{+}(\tx)$, it also preserves $\CCp$ so that $R_b\in O(L)^{+}$. If $\tx$ is supersingular, by \cite[Corollary in Page 371]{Og83}, $r_b$ preserves the period $\mathfrak{K}$, hence so does $R_b$. Therefore, $R_b\in O(L)^{++}$.

Assume that $\sigma|_L =\mathrm{id}$ for some $\sigma\in W$. Then $\sigma(\AAc)\cap\AAc \neq \varnothing$, and in particular, $\sigma(\Nef^{e}(\tx)^{\circ})\cap\Nef^{e}(\tx)^{\circ} \neq \varnothing$. This implies $\sigma = \mathrm{id}$, since by Proposition \ref{action}, $\Nef^{e}(\tx)$ is a fundamental domain for the action of $W_{\tx}:=\langle r_b\,|\, b\in\Nod(\tx)\rangle$ on $\Pos^{+}(\tx)$. This proves Claim $\mathrm{(1)}$.

Let us show Claim $\mathrm{(2)}$. Clearly the class of a nodal curve on $X$ belongs to $\NN$. On the other hand, each element $b \in \NN$ defines a nodal curve on $X$. Thus, there is an isomorphism $W \cong \wnod$.

For Claim $\mathrm{(3)}$, it remains to check the equality $W\cdot\AAA = \CCp$. 
Let us first note that $(x, b) = (\theta^{\ast}(x), \theta^{\ast}(b)) = (x, \theta^{\ast}(b))$ for all $x\in L_{\mathbb{R}}:= L\otimes \mathbb{R}$ and $b\in \Nod(\tx)$, so for $b\in \NN$, we have
\[ R_b(x) = x + (x,b)\cdot (b + \theta^{\ast}(b)). \]
This formula shows that $R_b|_{L_{\mathbb{R}}}$ is a reflection with respect to the hyperplane defined by $(-, b) = 0$. Recall that the subgroup of the orthogonal group $O(L_{\mathbb{R}})$ preserving the interior $(\mathcal{C}^{+})^{\circ}$ of $\mathcal{C}^{+}$ makes the quotient space $(\mathcal{C}^{+})^{\circ}/\mathbb{R}_{>0}$ a Lobachevsky space. It is a general fact that the fundamental
domain of a discrete group generated by reflections in hyperplanes of a Lobachevsky space, is the closure of a chamber (see \cite{Vi71}). Hence the cone $\AAA = \{ x \in \mathcal{C}^{+} \,|\,(x, b) \geq 0\ \mathrm{for\ all}\ b \in \NN \}$ is a fundamental domain for the action $W$ on $\mathcal{C}^{+}$.
\end{proof}

\begin{theorem}\label{equivtorelli} Let $G$ be the subgroup of $O(L)^{+}$ generated by $W$ and $\Aut(X)^{\ast}$. Then the following assertions hold.

\medskip {\rm (1)} $W$ is a normal subgroup of $G$ and $G = W\rtimes \Aut(X)^{\ast}$.

\medskip {\rm (2)} If the K3-cover $\tx$ is supersingular, then precisely $G = O(L)^{++}$.

\medskip {\rm (3)} The inclusion $G\subset O(L)^{+}$ is of finite index.
\end{theorem}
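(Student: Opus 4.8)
The plan is to treat the three parts in order, using the crystalline Torelli theorem and the lifting results as the two main engines, with the supersingular case doing the heavy lifting and the finite-height case reduced to it via liftings.

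\medskip\noindent\textbf{Part (1).} First I would verify that $W$ is normalized by $\Aut(X)^{\ast}$. Given $f\in\Aut(X)$ with induced isometry $f^{\ast}\in\Aut(X)^{\ast}$, the point is that $f$ lifts to an automorphism $\widetilde{f}$ of $\tx$ commuting with $\theta$, so $f^{\ast}$ (the restriction of $\widetilde{f}^{\ast}$ to $L$) permutes the nodal classes of $X$, equivalently $\widetilde{f}^{\ast}$ permutes $\NN$ compatibly with $\theta^{\ast}$. Concretely $\widetilde{f}^{\ast}\circ R_b\circ (\widetilde{f}^{\ast})^{-1} = R_{\widetilde{f}^{\ast}(b)}$, and since $\widetilde{f}^{\ast}$ commutes with $\theta^{\ast}$ we have $\widetilde{f}^{\ast}(b)\in\NN$ whenever $b\in\NN$; restricting to $L$ gives that $f^{\ast}$ normalizes $\wnod\cong W$. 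Hence $W\trianglelefteq G$. For the semidirect product decomposition it suffices to show $W\cap\Aut(X)^{\ast} = \{\mathrm{id}\}$: if $g^{\ast}\in\Aut(X)^{\ast}$ equals some $\sigma\in W$, then $\sigma$ preserves the ample cone $\AAc$ (because automorphisms do), so $\sigma(\AAc)\cap\AAc\neq\varnothing$, which by Lemma \ref{basic}(3) forces $\sigma=\mathrm{id}$. Combined with $G = W\cdot\Aut(X)^{\ast}$ by definition, this gives $G = W\rtimes\Aut(X)^{\ast}$.

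\medskip\noindent\textbf{Part (2), supersingular case.} Here I would apply the crystalline Torelli theorem (Theorem \ref{torelli}) on $\tx$ together with its $\theta$-equivariance. We already know $G\subseteq O(L)^{++}$ from Lemma \ref{basic}(1) and Notation (5)–(6). For the reverse inclusion, take $\varphi\in O(L)^{++}$, realized as $\tau|_L$ for an isometry $\tau$ of $\NS(\tx)$ commuting with $\theta^{\ast}$, preserving $\Pos(\tx)$ and the period $\mathfrak{K}$. By Proposition \ref{action}, the Weyl group $W_{\tx}$ acts on $\Pos^{+}(\tx)$ with fundamental domain $\Nef^{e}(\tx)$, so after composing $\tau$ with a suitable element $w\in W_{\tx}$ we may assume $\tau$ maps (the interior of) $\Nef^{e}(\tx)$ to itself — but we must do this $\theta$-equivariantly: the correct statement is that $w$ can be chosen in the equivariant Weyl group $W$ so that $w\circ\tau$ preserves $\AAc$, since $\varphi=\tau|_L$ already preserves $\CCp$ and $\AAA$ is a fundamental domain for $W$ acting on $\CCp$ by Lemma \ref{basic}(3). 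Then $w\circ\tau$ preserves $\Pos(\tx)$, the period, commutes with $\theta^{\ast}$, and preserves the ample cone of $\tx$; by Theorem \ref{torelli} it is induced by an automorphism $g$ of $\tx$, and since it commutes with $\theta^{\ast}$ one checks $g$ descends to $X$ (this is where the characteristic being odd and $\theta$ being an Enriques involution matters, so that $\{g\in\Aut(\tx): g\theta=\theta g\}/\langle\theta\rangle=\Aut(X)$). Restricting to $L$ gives $(w\circ\tau)|_L\in\Aut(X)^{\ast}$, hence $\varphi=\tau|_L\in W\cdot\Aut(X)^{\ast}=G$. The subtle point I expect to be the main obstacle is the $\theta$-equivariant reduction to the ample chamber: one needs that the element of $W_{\tx}$ moving $\tau(\Nef^e(\tx))$ back can be taken $\theta$-compatibly, i.e., lands in $W$, which requires unwinding that $\NN$ is exactly the set of nodal classes relevant to $\theta$-invariant walls.

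\medskip\noindent\textbf{Part (3), finite index.} For a supersingular K3-cover this is immediate: $G=O(L)^{++}\subseteq O(L)^{+}$ has finite index by Lemma \ref{finite}. For a K3-cover of finite height there is no Torelli theorem, so I would instead lift. By Corollary \ref{liftfinite}, the Enriques involution $\theta$ admits a N\'eron--Severi-preserving, fixed-point-free lifting $(\widetilde{\mathfrak{X}},\Theta)$ over $W(k)$; set $\mathfrak{X}:=\widetilde{\mathfrak{X}}/\langle\Theta\rangle$, an Enriques surface over $W(k)$ lifting $X$, with geometric generic fibre $\mathfrak{X}_{\kba}$ also an Enriques surface. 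By Theorem \ref{general} applied to $\widetilde{\mathfrak{X}}$, the specialization $\mathrm{sp}\colon\NS(\widetilde{\mathfrak{X}}_{\kba})\to\NS(\tx)$ is an isomorphism respecting ample cones and is $\phi$-equivariant for an injection $\phi\colon\Aut(\widetilde{\mathfrak{X}}_{\kba})\hookrightarrow\Aut(\tx)$; taking $\Theta$-invariants, $\mathrm{sp}$ identifies $\NS(\mathfrak{X}_{\kba})/\mathrm{tors}$ with $L$, respecting $\CCp$ and $\AAA$, and is equivariant for an injection $\Aut(\mathfrak{X}_{\kba})\hookrightarrow\Aut(X)$. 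Now over $\kba$ (characteristic $0$) the complex-analytic cone theorem for Enriques surfaces — or rather its consequence that the group generated by $\wnod$ and $\Aut^{\ast}$ has finite index in $O(L_{\kba})^{+}$, which is exactly the statement of \cite[Section 1]{OS01}, valid over any algebraically closed field of characteristic $0$ by Lefschetz — gives that the analogue $G_{\kba}\subseteq O(L)^{+}$ has finite index. Under the specialization identification, $G_{\kba}$ maps into $G$ (nodal classes and automorphisms specialize), so $G\supseteq G_{\kba}$ has finite index in $O(L)^{+}$ as well. The main thing to be careful about here is that one genuinely has the characteristic-$0$ statement available in the form needed; if the paper has only cited, not proved, the complex case, I would instead run the Part (2) argument verbatim over $\kba$ using the Hodge-theoretic Torelli theorem for the complex Enriques surface $\mathfrak{X}_{\kba}$, obtaining $G_{\kba}=$ (the full $\theta^{\ast}$-commuting, positive-cone-preserving orthogonal group of $L$), which is all of $O(L)^{+}$ up to finite index, and then transport along $\mathrm{sp}$.
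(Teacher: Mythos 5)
Your proposal is correct and follows essentially the same route as the paper: the conjugation formula plus the fundamental-domain property of $\mathcal{A}$ for part (1), the crystalline Torelli theorem applied after moving into $\AAc$ via the equivariant Weyl group for part (2), and lifting to characteristic $0$ via Corollary \ref{liftfinite} and Theorem \ref{general} for part (3). The only cosmetic difference is the characteristic-$0$ input in part (3): the paper invokes the main theorem of \cite{Do84} (that $\wnod\rtimes\Aut^{\ast}$ has finite index in $O(L)^{+}$ over an algebraically closed field of characteristic $0$) rather than \cite{OS01} or a rerun of the Torelli argument, but this is the same statement.
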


\begin{proof} (1) We first check that for each $b\in \NN$ and $\sigma\in \Aut^{\ast}(X)$, there exists an element $b^{\prime}\in \NN$ such that $\sigma^{-1}\circ R_b\circ \sigma= R_{b^{\prime}}$. Choosing $f\in \Aut(X)$ such that $f^{\ast} = \sigma$, we obtain that $\sigma^{-1}\circ R_b\circ\sigma = (f^{-1})^{\ast}\circ R_b\circ f^{\ast}$. 
Therefore,
\[ \sigma^{-1}\circ R_b\circ\sigma(x) = x + (x, (f^{-1})^{\ast}(b)) \cdot(f^{-1})^{\ast}(b) + (x, \theta^{\ast}\circ(f^{-1})^{\ast}(b)) \cdot(\theta^{\ast}\circ(f^{-1})^{\ast}(b)). \]
Note that $f^{\ast}\circ\theta^{\ast} =\theta^{\ast}\circ f^{\ast}$, so we can take $b^{\prime}:= (f^{-1})^{\ast}(b) \in \NN$. This shows that $W$ is a normal subgroup of $G$. We next check the uniqueness of the decomposition. Assume that $r\circ\tau = r^{\prime}\circ\tau^{\prime}$ for $r, r^{\prime}\in W$ and $\tau, \tau^{\prime}\in \Aut(X)^{\ast}$. Since $\tau\circ(\tau^{\prime})^{-1}(\AAA) = \AAA$, we see that $r^{-1}\circ r^{\prime}(\AAA)= \AAA$. Therefore, $r^{-1}\circ r^{\prime} = \mathrm{id}$ by Lemma \ref{basic} (3).

\medskip (2) It remains to show that $O(L)^{++} = W\cdot \Aut(X)^{\ast}$. Let $\sigma$ be an element of $O(L)^{++}$ and take $x\in\AAc$. By Lemma \ref{basic} (3), we get an element $r\in W$ such that $r^{-1}\circ\sigma(x)\in\AAc$. Note that $r^{-1}\circ\sigma\in O(L)^{++}$ by Lemma \ref{basic} (1), so there exists an isometry $\rho$ of $\NS(\tx)$ preserving the period $\mathfrak{K}$, $\rho\circ\theta^{\ast} = \theta^{\ast}\circ\rho$, and $\rho|_L = r^{-1}\circ\sigma$. Moreover, $\rho$ preserves the ample cone $\Amp(\tx)$ of $\tx$, as $\rho(x) = r^{-1}\circ\sigma(x)\in \rho(\Nef^{e}(\tx)^{\circ})\cap\Nef^{e}(\tx)^{\circ}$. Hence by Theorem \ref{torelli}, there exists $f\in\Aut(\tx)$ such that $f^{\ast} = \rho$. Furthermore, $f\circ\theta = \theta\circ f$ again by Theorem \ref{torelli}, because $f^{\ast}\circ\theta^{\ast} = \theta^{\ast}\circ f^{\ast}$. Hence $f^{\ast}\in \Aut(X)^{\ast}$. As $f^{\ast} = r^{-1}\circ\sigma$, we get $O(L)^{++} = W\cdot \Aut(X)^{\ast}$.

\medskip (3) If the K3-cover $\tx$ is supersingular, then it follows from the assertion (2) and Lemma \ref{finite}. In what follows, we assume that $\tx$ is of finite height. By Corollary \ref{liftfinite}, we have a smooth projective relative K3 surface $\mathfrak{X} \to \Spec\,W$ with the specialization map being isomorphic and a fixed-point free automorphism $\Theta$ of $\widetilde{\mathfrak{X}}$ satisfying that $\Theta|_{\tx} = \theta$. If we take the quotient $\mathfrak{X}:= \widetilde{\mathfrak{X}}/\Theta$, then clearly $\mathfrak{X} \to \Spec\,W$ is a relative Enriques surface with the special fiber $X$. We set $X_0 := \mathfrak{X}_{\overline{K}}$ and $\tx_0 := \mathfrak{\widetilde{X}}_{\overline{K}}$ for simplicity. Then $\tx_0$ is the K3-cover of $X_0$ with the Enriques involution $\theta_0$ induced by $\theta$. By Theorem \ref{general}, there is an injective homomorphism $\phi: \Aut(\tx_0) \to \Aut(\tx)$ such that the specialization map $\spp$ is $\phi$-equivariant. We can identify $\theta_0$ and $\theta$ via $\phi$, so that $\Aut(X_0)\subset \Aut(X)$. This is because of the fact that $\Aut(X) = \{f\in\Aut(\tx)\,|\,f\circ\theta = \theta\circ f\}/\langle\theta\rangle$. Moreover, $\NS(X_0)/\mathrm{torsion}\cong \NS(X)/\mathrm{torsion}\cong L$, so $\Aut(X_0)^{\ast}\subset \Aut(X)^{\ast}\subset O(L)^{+}$. Let $W_0$ be the equivariant Weyl group for $X_0$. Then by the construction, it follows that $W_0 = W$. Therefore, we find that $W_0\rtimes \Aut(X_0)^{\ast}\subset W\rtimes\Aut(X)^{\ast}\subset O(L)^{+}$. By the main theorem of \cite{Do84}, the inclusion $W_0\rtimes \Aut(X_0)^{\ast}\subset O(L)^{+}$ is of finite index, since $X_0$ is defined over an algebraically closed field of characteristic $0$. Thus, $G = W\rtimes\Aut(X)^{\ast}\subset O(L)^{+}$ is of finite index.
\end{proof}


We digress to the following finiteness result for the bicanonical representation of the automorphism group of an Enriques surface which will be used in Section \ref{sec4}.  The corresponding result over complex number field is well-known (see e.g., \cite[Section 14]{Ue75}). The proof is inspired by \cite[Proposition 3.5]{Ja16}.

\begin{proposition}\label{represent} Let $X$ be an Enriques surface over $k$ whose K3-cover $\tx$ is of finite height. Let $\chi: \Aut(X) \to \GL(H^0(X, \omega_X^{\otimes 2})) \cong k^{\times}$ be the bicanonical representation of $\Aut(X)$. Then the image of $\chi$ is a finite group (and hence a finite cyclic group).
\end{proposition}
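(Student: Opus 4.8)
The plan is to pass to the K3-cover $\tx$ and to reduce the statement to the analogous finiteness for the natural action of $\Aut(\tx)$ on the one-dimensional space $H^0(\tx,\Omega^2_{\tx})$, which I would then control by means of the formal Brauer group $\widehat{\Br}(\tx)$. \emph{First}, since $p$ is odd, $\omega_X$ is a nonzero $2$-torsion class in $\Pic(X)$, so $\omega_X^{\otimes 2}\cong\mathcal{O}_X$ and $H^0(X,\omega_X^{\otimes 2})$ is one-dimensional, consistent with $\GL(H^0(X,\omega_X^{\otimes 2}))\cong k^{\times}$. Writing $\pi\colon\tx\to X$ for the étale double cover, we have $\pi^{\ast}\omega_X=\omega_{\tx}$, hence $\pi^{\ast}(\omega_X^{\otimes 2})=\omega_{\tx}^{\otimes 2}\cong\mathcal{O}_{\tx}$, and $H^0(X,\omega_X^{\otimes 2})$ is identified with $H^0(\tx,\omega_{\tx}^{\otimes 2})=H^0(\tx,\Omega^2_{\tx})^{\otimes 2}$. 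Let $\rho\colon\Aut(\tx)\to\GL(H^0(\tx,\Omega^2_{\tx}))\cong k^{\times}$ be the canonical representation. If $f\in\Aut(X)$ is induced by $\widetilde f\in\Aut(\tx)$ with $\widetilde f\circ\theta=\theta\circ\widetilde f$, then under the above identification $\chi(f)=\rho(\widetilde f)^{2}$; this is independent of the chosen lift because $\rho(\theta)=-1$ (the $2$-form on $\tx$ does not descend, as $\omega_X\neq\mathcal{O}_X$), so $\rho(\theta\widetilde f)^2=\rho(\widetilde f)^2$. Hence $\mathrm{Image}(\chi)\subseteq\{\,g^{2}:g\in\mathrm{Image}(\rho)\,\}$, and it suffices to prove that $\mathrm{Image}(\rho)$ is finite.

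\emph{Next}, I would show that $\rho$ factors through the formal Brauer group. By Serre duality there is an $\Aut(\tx)$-equivariant isomorphism $H^0(\tx,\Omega^2_{\tx})\cong H^2(\tx,\mathcal{O}_{\tx})^{\vee}$, and by the theory of Artin--Mazur there is a functorial (hence $\Aut(\tx)$-equivariant) identification $H^2(\tx,\mathcal{O}_{\tx})\cong\mathrm{Lie}\,\widehat{\Br}(\tx)$. Since $\widehat{\Br}$ is a functor of $\tx$, the action of $\Aut(\tx)$ on $H^2(\tx,\mathcal{O}_{\tx})$ is induced by the homomorphism $\Aut(\tx)\to\Aut(\widehat{\Br}(\tx))$ followed by the action on the Lie algebra. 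Thus $\rho$, up to inversion, factors as $\Aut(\tx)\to\Aut(\widehat{\Br}(\tx))\to\GL(\mathrm{Lie}\,\widehat{\Br}(\tx))\cong k^{\times}$, and it is enough to show that the image of the composite of the last two maps is finite.

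\emph{Then}, I would invoke the structure of finite-height formal groups. Because $\tx$ has finite height $h$, the one-dimensional formal group $\widehat{\Br}(\tx)$ over the algebraically closed field $k$ is, by Lazard's classification, the base change of the standard formal group law of height $h$; its endomorphism ring is the maximal order $\mathcal{O}_D$ in the central division $\QQ_p$-algebra of invariant $1/h$, a complete local ring with finite residue field $\mathbb{F}_{p^h}$. The ring homomorphism $\mathrm{End}\,\widehat{\Br}(\tx)\to\mathrm{End}(\mathrm{Lie}\,\widehat{\Br}(\tx))=k$ given by the tangent map at the origin sends a power series $\varphi(t)=a_1t+a_2t^2+\cdots$ to $a_1$; a non-invertible endomorphism has $a_1=0$, so this homomorphism kills the maximal ideal of $\mathcal{O}_D$ and factors through $\mathbb{F}_{p^h}\hookrightarrow k$. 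Consequently $\mathrm{Image}\bigl(\Aut(\widehat{\Br}(\tx))=\mathcal{O}_D^{\times}\to k^{\times}\bigr)\subseteq\mathbb{F}_{p^h}^{\times}$, so $\mathrm{Image}(\rho)$ is finite, hence $\mathrm{Image}(\chi)$ is finite; being a finite subgroup of $k^{\times}$, it is cyclic.

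\emph{The main obstacle} is Step~3, namely the input from the deformation theory of formal groups: that $\mathrm{End}\,\widehat{\Br}(\tx)$ is (the maximal) order in the division algebra of invariant $1/h$—in particular a complete local ring with finite residue field—and that its non-units act trivially on the Lie algebra. Steps~1 and~2 are essentially formal once the Artin--Mazur identification is recalled. I note that for those automorphisms which are weakly tame one could instead lift to characteristic $0$ via Proposition~\ref{liftable} and Theorem~\ref{general} and quote the classical finiteness of the bicanonical representation over $\mathbb{C}$; but since not every automorphism of a finite-height K3 surface is weakly tame, it is the argument through $\widehat{\Br}(\tx)$ that makes the statement uniform.
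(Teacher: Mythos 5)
Your proof is correct, but it takes a genuinely different route from the paper's. The paper does not touch the formal Brauer group in this proof: it lifts the pair $(\tx,\theta)$ to characteristic $0$ (Corollary \ref{liftfinite}), obtaining a lift $X_0$ of $X$ together with an injection $\Aut(X_0)\hookrightarrow\Aut(X)$, quotes the classical finiteness of the bicanonical representation in characteristic $0$, and then uses the fact --- extracted from the proof of Theorem \ref{equivtorelli}~(3) via Dolgachev's theorem --- that $\Aut(X_0)$ has finite index in $\Aut(X)$; since $\chi$ lands in the abelian group $k^{\times}$, finiteness of $\chi(\Aut(X_0))$ then forces finiteness of $\chi(\Aut(X))$. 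Your argument instead runs Jang's formal-group computation (\cite{Ja16}) on the K3-cover: it is more intrinsic, avoids both the lifting machinery and the finite-index input, and yields more, namely the explicit bound $\#\,\mathrm{Im}(\rho)\mid p^{h}-1$ and hence an image of order prime to $p$. The price is the Dieudonn\'e--Lubin description of $\mathrm{End}\,\widehat{\Br}(\tx)$ as the maximal order in the division algebra of invariant $1/h$; your reduction steps (the identity $\chi(f)=\rho(\widetilde f)^{2}$ and its independence of the lift, Serre duality, Artin--Mazur functoriality, and the factorization of the tangent map through the residue field $\mathbb{F}_{p^{h}}$) all check out. One small correction to your closing remark: the lifting route does not require lifting each automorphism individually (which would indeed fail for non--weakly-tame automorphisms); the paper only lifts the tame involution $\theta$ and then transfers finiteness across a finite-index subgroup, so that approach is uniform as well.
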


\begin{proof} As in the proof of Theorem \ref{equivtorelli} (3), $X$ has a lift $X_0$ to an algebraically closed field of characteristic $0$ such that the specialization map is isomorphic and equivariant with respect to the injective map $\phi: \Aut(X_0) \to \Aut(X)$. Then the image of $\Aut(X_0) \to \GL(H^0(X_0, \omega_{X_0}^{\otimes 2}))$ is finite. Therefore, the image of $\Aut(X_0) \hookrightarrow \Aut(X) \to \GL(H^0(X, \omega_X^{\otimes 2}))$ is also finite. On the other hand, the proof of Theorem \ref{equivtorelli} (3) shows that $\Aut(X_0)^{\ast}\subset\Aut(X)^{\ast}$ is of finite index. Since the kernel of the map $\Aut(X_0) \hookrightarrow \Aut(X) \to O(L)^{+}$ is finite, it follows that $\Aut(X_0)\subset\Aut(X)$ is also of finite index. We then obtain that the image of $\chi$ is finite.
\end{proof}

We return to complete the proofs of Theorem \ref{finiteness} and \ref{cone}.

\begin{proof}[Proof of Theorem \ref{finiteness}] As up to finite kernel, $\Aut(X)$ is isomorphic to a quotient of arithmetic group by Theorem \ref{equivtorelli} (3), so it is finitely generated by a result of Borel-Harish-Chandra \cite{BH62} and Proposition \ref{group}.
\end{proof}

\begin{corollary}[{\textit{cf.} \cite[Corollaries 3.4 and 3.5]{Do84}}]\label{do3.45} $\Aut(X)$ is finite if and only if $\wnod$ is of finite index in $O(L)^{+}$. In particular, $\Aut(X)$ is infinite if $X$ does not contain nonsingular rational curves.
\end{corollary}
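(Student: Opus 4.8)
The plan is to deduce the corollary entirely from the structure result Theorem~\ref{equivtorelli}, so that the argument becomes purely group-theoretic. First I would recall that the natural homomorphism $\Aut(X)\to\Aut(X)^{\ast}$ has finite kernel, so that $\Aut(X)$ is finite if and only if $\Aut(X)^{\ast}$ is finite. By Theorem~\ref{equivtorelli}(1) we have $G=W\rtimes\Aut(X)^{\ast}$, whence $[G:W]=|\Aut(X)^{\ast}|$; therefore $\Aut(X)$ is finite if and only if $W$ has finite index in $G$.

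The next step is to translate this into a statement about $O(L)^{+}$. Since $[O(L)^{+}:G]<\infty$ by Theorem~\ref{equivtorelli}(3), multiplicativity of indices gives $[O(L)^{+}:W]=[O(L)^{+}:G]\cdot[G:W]$, so $W$ has finite index in $G$ if and only if $W$ has finite index in $O(L)^{+}$. Finally, by Lemma~\ref{basic}(1) restriction to $L$ embeds $W$ into $O(L)^{+}$, and by Lemma~\ref{basic}(2) its image is exactly $\wnod$; hence $[O(L)^{+}:W]=[O(L)^{+}:\wnod]$. Combining these, $\Aut(X)$ is finite if and only if $\wnod$ has finite index in $O(L)^{+}$, which is the first assertion.

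For the last assertion, if $X$ contains no nonsingular rational curve then $\wnod$ is the trivial subgroup, which has finite index in $O(L)^{+}$ only if $O(L)^{+}$ is itself finite. But $L\cong\NS(X)/\mathrm{torsion}$ has rank $10$ and signature $(1,9)$, so by the classical fact that the orthogonal group of an indefinite integral lattice of rank $\ge 3$ is infinite, the group $O(L)$, and hence its finite-index subgroup $O(L)^{+}$, is infinite. Therefore $\wnod$ is not of finite index, and by the equivalence just proved $\Aut(X)$ is infinite.

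Since all the geometric content --- the crystalline Torelli theorem, the characteristic-$0$ liftings, and Dolgachev's theorem over the complex numbers --- is already packaged into Theorem~\ref{equivtorelli}, I do not expect any genuine obstacle in this corollary; the only step that merits an explicit word is the infinitude of $O(L)^{+}$. Alternatively, one could bypass this last point by invoking Theorem~\ref{cone}: when $X$ has no $(-2)$-curve one has $\AAA=\CCp=\Pos^{+}(X)$, a round cone, which cannot be written as a finite union of translates of a rational polyhedral fundamental domain, so $\Aut(X)$ must be infinite.
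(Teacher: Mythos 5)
Your proposal is correct and follows exactly the route the paper intends: the paper's proof is the one-line citation of Theorem \ref{equivtorelli} and Lemma \ref{basic}(2), and your argument simply fills in the index computation $[O(L)^{+}:W]=[O(L)^{+}:G]\cdot[G:W]$ together with the finiteness of the kernel of $\Aut(X)\to O(L)^{+}$ and the infinitude of $O(L)^{+}$ (which holds since $L$ is an indefinite lattice of rank $10$). No gaps.
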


\begin{proof} This follows from Theorem \ref{equivtorelli} and Lemma \ref{basic} (2).
\end{proof}

\begin{proof}[Proof of Theorem \ref{cone}] Note that $G = W\rtimes \Aut(X)^{\ast}$ is of finite index in the arithmetic group $O(L)^{+}$ of the self-dual homogeneous cone $\mathcal{C}:= \Pos(X)$. Then by a general result (\cite[Corollary II.4.9]{AMRT10}), there exists a rational polyhedral fundamental domain $\Pi$ for the action $G$ on $\mathcal{C}^{+}$. We may translate $\Pi$ by a suitable element of $W$ and assume that $\Pi^{\circ}\cap\AAc\neq\varnothing$.

\begin{claim} \textit{This $\Pi$ satisfies that $\Pi\subseteq \mathcal{A}$.}
\end{claim}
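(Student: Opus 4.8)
The plan is to exploit the two fundamental-domain properties simultaneously: $\Pi$ is a fundamental domain for the action of $G = W \rtimes \Aut(X)^{\ast}$ on $\CCp$, while $\AAA$ is a fundamental domain for the action of the normal subgroup $W \trianglelefteq G$ on $\CCp$ (Lemma~\ref{basic}(3)). First I would note that since $\Pi^{\circ} \cap \AAc \neq \varnothing$ by our normalization, it suffices to show that $\Pi$ cannot ``stick out'' of $\AAA$ past any of the reflection walls bounding $\AAA$. Recall from the proof of Lemma~\ref{basic}(3) that $\AAA = \{x \in \CCp \mid (x,b) \geq 0 \text{ for all } b \in \NN\}$, and that each $R_b$ with $b \in \NN$ acts on $L_{\RR}$ as the reflection in the wall $(-,b) = 0$. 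So the walls of $\AAA$ are among the fixed hyperplanes of the generators $R_b$ of $W$.

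The key step is the following dichotomy argument. Suppose for contradiction that $\Pi \not\subseteq \AAA$. Then $\Pi^{\circ}$ meets the complement of $\AAA$, so there is some $b \in \NN$ with $\Pi^{\circ} \cap \{(-,b) < 0\} \neq \varnothing$; since also $\Pi^{\circ} \cap \AAc \neq \varnothing \subseteq \{(-,b) > 0\}$ and $\Pi^{\circ}$ is connected, $\Pi^{\circ}$ must actually meet the wall $H_b := \{(-,b) = 0\}$ in a point where it crosses transversally, so $\Pi^{\circ}$ contains points $y$ with $(y,b) < 0$. Now apply the reflection $R_b \in W \subseteq G$: the point $R_b(y)$ satisfies $(R_b(y), b) = -(y,b) > 0$, and I would check that $R_b(y) \in R_b(\Pi^{\circ}) = (R_b^{\ast}\Pi)^{\circ}$. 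Choosing $y$ close enough to the wall, both $y$ and a neighborhood of $R_b(y)$ lie in $\Pi^{\circ}$ as well (since $\Pi^{\circ}$ straddles $H_b$), so $\Pi^{\circ} \cap (R_b^{\ast}\Pi)^{\circ} \neq \varnothing$, forcing $R_b = \mathrm{id}$ on $L$ by the fundamental-domain property of $\Pi$ for $G$. But $R_b|_L = \mathrm{id}$ contradicts Lemma~\ref{basic}(1) (which gives $R_b = \mathrm{id}$ on all of $\NS(\tx)$, impossible as $(b,b) = -2 \neq 0$ so $r_b \neq \mathrm{id}$ and $R_b = r_b r_{\theta^{\ast}b}$ is nontrivial). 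This contradiction yields $\Pi \subseteq \AAA$.

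The main obstacle I anticipate is the point-chasing near the wall: one must be careful that crossing the wall $H_b$ transversally inside the \emph{open} set $\Pi^{\circ}$ really produces an overlap $\Pi^{\circ} \cap (R_b^{\ast}\Pi)^{\circ}$ and not merely a boundary contact. A cleaner route, which I would prefer to write up, avoids this entirely: since $\Pi$ is a fundamental domain for $G \supseteq W$, it is in particular a fundamental domain for the coset space, and the translates $\{g^{\ast}\Pi\}_{g \in G}$ tile $\CCp$; grouping by cosets of $W$, we get $\CCp = \bigcup_{\sigma \in \Aut(X)^{\ast}} \sigma(W \cdot \Pi)$ with disjoint interiors, while also $\CCp = \bigcup_{\sigma} \sigma(\AAA)$ with disjoint interiors. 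Since $\Pi^{\circ} \cap \AAc \neq \varnothing$ and $\AAc$ is disjoint from $\sigma(\AAc)$ for $\sigma \neq \mathrm{id}$, the chamber $\AAA$ is the unique $\Aut(X)^{\ast}$-translate of $\AAA$ meeting $\Pi^{\circ}$; as $W \cdot \Pi$ and $\AAA$ are both unions-of-$W$-chambers-style regions with $\AAA$ a single $W$-chamber, one concludes $\Pi \subseteq W \cdot \AAA$ and then, using that distinct $W$-translates of $\AAc$ are disjoint together with connectedness of $\Pi^{\circ}$, that $\Pi \subseteq \AAA$.
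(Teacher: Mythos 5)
Your first argument is essentially the paper's proof: the paper likewise deduces $\Pi^{\circ}\cap H_b\neq\varnothing$ for some wall $H_b$ of $\AAA$ from the normalization $\Pi^{\circ}\cap\AAc\neq\varnothing$, and then contradicts the fundamental-domain property of $\Pi$ for $G$ using $R_b\in W\subseteq G$. The point-chasing you worry about is unnecessary---since $R_b$ fixes $H_b$ pointwise, any $x\in\Pi^{\circ}\cap H_b$ already lies in $R_b(\Pi^{\circ})\cap\Pi^{\circ}$---whereas your proposed ``cleaner route'' is actually the weaker one, because connectedness of $\Pi^{\circ}$ alone does not prevent it from crossing a wall into an adjacent $W$-translate of $\AAA$; ruling that out requires the reflection argument anyway.
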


\begin{proof}[Proof of Claim] Otherwise, there would be an element $b\in\Nod(\tx)$ such that the wall $H_b:=\{x\in L\otimes \mathbb{R}\,|\, (x, b) = 0\}$ of $\mathcal{A}$ satisfies $\Pi^{\circ}\cap H_b\neq\varnothing$. However, since $R_b(x) = x$ for $x\in \Pi^{\circ}\cap H_b$, it follows that $R_b(\Pi^{\circ}) \cap \Pi^{\circ} \neq\varnothing$, which is a contraction.
\end{proof}

Now combining the facts that $\Aut(X)^{\ast} \cong G/W$, $\Aut(X)^{\ast}\cdot \mathcal{A} = \mathcal{A}$, and that $\mathcal{A}$ is a fundamental domain for the action of $W$ on $\CCp$ by Lemma \ref{basic} (3), we conclude that this $\Pi$ gives a desired rational polynomial fundamental domain. This completes the proof.
\end{proof}


\begin{corollary}\label{smoothrational} On an Enriques surface $X$ over $k$ there are only finitely many smooth rational curves modulo automorphisms of $X$.
\end{corollary}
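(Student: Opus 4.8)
The plan is to read off the result from Theorem \ref{cone} via the standard dictionary between smooth rational curves on $X$ and the facets (codimension-one faces cut out by a supporting hyperplane) of the effective nef cone $\AAA := \Nef^{e}(X)$. First I would record that a smooth rational curve $C\subset X$ is a $(-2)$-curve: by adjunction $C^{2}=2g(C)-2=-2$, as $K_{X}$ is numerically trivial. Its class $[C]$ in $L=\NS(X)/\mathrm{torsion}$ is effective with $[C]^{2}=-2$ and $[C]\cdot A>0$ for every ample $A$, and the hyperplane $[C]^{\perp}$ supports a facet $F_{C}:=[C]^{\perp}\cap\AAA$ of $\AAA$. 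Conversely, every facet of $\AAA$ arises this way, since $\AAA$ is cut out inside $\CCp$ by the inequalities $x\cdot[C]\geq 0$ for $C$ ranging over the $(-2)$-curves, and no facet comes from the boundary of $\CCp$ (only rational boundary rays lie in $\CCp$, and these are $1$-dimensional while $\mathrm{rk}\,L=10$). The assignment $C\mapsto F_{C}$ is injective (distinct $(-2)$-classes span distinct hyperplanes, and distinct irreducible curves one of which has negative self-intersection have distinct classes) and $\Aut(X)$-equivariant, the action on facets factoring through $\Aut(X)^{\ast}$. Hence it suffices to show that $\Aut(X)^{\ast}$ acts with only finitely many orbits on the facets of $\AAA$.

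For that, I would invoke Theorem \ref{cone}: there is a rational polyhedral cone $\Pi\subseteq\AAA$ which is a fundamental domain for $\Aut(X)^{\ast}$ acting on $\AAA$, so that $\AAA=\bigcup_{h\in\Aut(X)^{\ast}}h\Pi$, and $\Pi$ has only finitely many facets, say supported on hyperplanes $H_{1},\dots,H_{m}$. Fix a facet $F$ of $\AAA$ and let $F^{\circ}$ be its relative interior, a nonempty open subset of an affine hyperplane of $L\otimes\RR$, hence a Baire space. Writing $F^{\circ}=\bigcup_{h\in\Aut(X)^{\ast}}(h\Pi\cap F^{\circ})$ as a countable union of sets closed in $F^{\circ}$, the Baire category theorem produces an $h_{0}$ for which $h_{0}\Pi\cap F^{\circ}$ has nonempty interior in $F^{\circ}$; in particular $h_{0}\Pi$ meets the supporting hyperplane of $F$ in a set of dimension $\mathrm{rk}\,L-1$, so that hyperplane supports a facet of $h_{0}\Pi$. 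Therefore $h_{0}^{-1}(F)=\AAA\cap H_{i}$ for some $i$, and every facet of $\AAA$ is $\Aut(X)^{\ast}$-equivalent to one of the at most $m$ cones $\AAA\cap H_{1},\dots,\AAA\cap H_{m}$. Transporting this finiteness back through the equivariant bijection of the first paragraph proves the corollary. (Alternatively, granting that the family $\{h\Pi\}$ is locally finite in the interior of $\CCp$, one may replace the Baire step by a direct dimension count in a neighbourhood of a relative interior point of $F$.)

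The step I expect to be the only real obstacle — and it is a minor one — is the dictionary in the first paragraph, in particular the claim that a single $(-2)$-curve $C$ cuts out a face of $\AAA$ of the full codimension one rather than something smaller. This rests on the classical fact that the birational morphism $X\to\overline{X}$ contracting exactly $C$ is an extremal contraction, so that the pullback to $X$ of an ample class on $\overline{X}$ is a nef class lying in $\Pos(X)$, meeting $C$ in degree zero and every other irreducible curve positively; such a class lies in the relative interior of $F_{C}$, which is thus genuinely of codimension one. Everything else is formal once Theorem \ref{cone} is in hand.
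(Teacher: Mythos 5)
Your proof is correct and follows essentially the same route as the paper's: both deduce the statement from Theorem \ref{cone} by attaching to each smooth rational curve a face of the effective nef cone and observing that, modulo $\Aut(X)^{\ast}$, only the finitely many faces of the rational polyhedral fundamental domain $\Pi$ can occur. The paper packages this through contractions and semi-ample line bundles (a class in the relative interior of a face of $\Pi$ determines the contraction, and every smooth rational curve is contracted by one of these), whereas you work directly with the facets $[C]^{\perp}\cap\Nef^{e}(X)$ and supply the Baire-category step that the paper leaves implicit; both versions are sound.
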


\begin{proof} This follows immediately from the geometric meaning of the cone theorem. Indeed, each contraction of $X$ to a projective surface is given by some semi-ample line bundle on $X$. The class of such a line bundle lies in the nef effective cone, and two semi-ample line bundles in the interior of the same face of the rational polyhedral fundamental domain determine the same contraction of $X$. On the other hand, one can contract each smooth rational curve on $X$ by some contraction of $X$.
\end{proof}

\begin{corollary}\label{ellipticpencil} On an Enriques surface $X$ over $k$ there are only finitely many elliptic fibrations modulo automorphisms of $X$.
\end{corollary}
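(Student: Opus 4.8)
The plan is to derive this from the cone theorem (Theorem~\ref{cone}) in exactly the spirit of Corollary~\ref{smoothrational}: an elliptic fibration of $X$ is a particular contraction of $X$, namely one onto a curve, and the rational polyhedral fundamental domain produced by the cone theorem forces $X$ to admit only finitely many contractions modulo $\Aut(X)$.

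Concretely, I would proceed as follows. To an elliptic fibration $f\colon X\to\PP^{1}$ I attach the numerical class $h_{f}\in L$ of a general fibre $F$. This class is a pullback of an ample class on $\PP^{1}$, hence nef and semi-ample; it is isotropic, $h_{f}^{2}=0$; and it is manifestly effective, so $h_{f}\in\AAA=\Nef^{e}(X)$. Moreover $f$ is recovered from $h_{f}$: for $m\gg 0$ the morphism $\phi_{|mF|}$ is the composite of $f$ with an embedding of $\PP^{1}$, so its Stein factorization is $f$ itself, whence $f$ depends only on the ray $\RR_{\geq 0}h_{f}$. Now $h_{f}$ lies in the relative interior of a unique face of $\AAA$; since by Theorem~\ref{cone} there is a rational polyhedral fundamental domain $\Pi$ for the action of $\Aut(X)$ on $\AAA$, after replacing $f$ by $f\circ g$ for a suitable $g\in\Aut(X)$ --- which replaces $h_{f}$ by $g^{\ast}h_{f}$ --- we may assume $h_{f}$ lies in the relative interior of a face of $\Pi$, and $\Pi$ being rational polyhedral has only finitely many faces. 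So it suffices to show that two elliptic fibrations whose fibre classes, after such a replacement, lie in the relative interior of one and the same face of $\Pi$, differ by an element of $\Aut(X)$. But two semi-ample classes in the relative interior of one face of $\Pi$ determine the same contraction of $X$ (recorded in the proof of Corollary~\ref{smoothrational}), and by the remark above the contraction determined by a fibre class is precisely the corresponding elliptic fibration; hence the two (replaced) fibrations coincide up to an automorphism of $\PP^{1}$, and the original two therefore differ by an automorphism of $X$. This proves the corollary.

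The only content beyond the mechanical use of the cone theorem is the standard dictionary between elliptic fibrations and isotropic nef rays on an Enriques surface: that an elliptic fibration is determined by the ray of its fibre class --- with the harmless feature that, in our odd characteristic, $[F]=2[F_{1}]$ for a primitive half-fibre class $[F_{1}]\in L$, so one may work equally well with primitive isotropic nef classes --- and that these classes are effective and nef, placing everything inside $\AAA$ where the cone theorem applies. I do not expect a genuine obstacle here. If one prefers to avoid the language of faces, one can instead observe that an isotropic ray contained in the rational polyhedral cone $\Pi\subseteq\overline{\Pos(X)}$ is necessarily an extremal ray of $\Pi$: an isotropic class in the closed positive cone cannot be a nontrivial convex combination of classes of $\overline{\Pos(X)}$, because two non-proportional isotropic vectors in a hyperbolic lattice are never orthogonal. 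Hence $\Pi$ contains only finitely many isotropic rays, and every fibre ray of an elliptic fibration of $X$ is $\Aut(X)$-equivalent to one of them.
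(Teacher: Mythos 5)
Your proposal is correct and follows essentially the same route as the paper: identify an elliptic fibration with the nef isotropic ray spanned by its fibre class and then invoke Theorem~\ref{cone} to get finitely many such rays modulo $\Aut(X)$, since isotropic rays in the rational polyhedral fundamental domain are extremal. The only difference is presentational: the paper cites Namikawa's identification of elliptic pencils with nef isotropic rays (and therefore must first exclude quasi-elliptic fibrations via Lang's result), whereas you use only the injective direction of that dictionary, recovering the fibration from its fibre ray by Stein factorization, which renders the quasi-elliptic discussion unnecessary.
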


\begin{proof} We follow the argument in \cite[Theorem 6.7]{Na85}. First note that, by our assumption, $\mathrm{char}(k) = p \geq 3$, so quasi-elliptic fibrations would appear only when $p = 3$. But by \cite[Proposition 2.7]{La79}, there exist no quasi-elliptic fibrations on Enriques surfaces over a field of characteristic three.

By \cite[Proposition 6.6]{Na85}, we can identify the set of elliptic pencils on $X$ with the set
\[ \mathscr{B}:= \{\mathbb{R}\cdot e \,|\, e \in L\ \mathrm{with}\ e \ \mathrm{nef} \ \mathrm{and} \ (e, e) = 0\}. \]
Therefore, it is enough to show that there are only finitely many elements in $\mathscr{B}$ modulo $\Aut(X)^{\ast}$. By the Cone Theorem \ref{cone}, $\Aut(X)^{\ast}$ acts on the effective nef cone $\Nef^e(X)$ with a rational polyhedral fundamental domain $\Pi$, which contains only finitely many rational isotropic vectors modulo constant multiple. This implies the finiteness of the set $\mathscr{B}$ modulo $\Aut(X)^{\ast}$.
\end{proof}

We finally turn to the proof of Proposition \ref{finitefield} following the argument in \cite[Theorem 1.1 (1)]{Og19}.

\begin{lemma}\label{basechange} Let $V$ be a projective variety defined over an algebraically closed field $\mathbf{K}$. Assume that $\Aut(V/\mathbf{K})$ is discrete. Then $\Aut(V_{\mathbf{L}}/\mathbf{L}) = \Aut(V/\mathbf{K})$ for any field extension $\mathbf{L}/\mathbf{K}$.
\end{lemma}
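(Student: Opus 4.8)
The plan is to exploit the fact that $\Aut(V/\mathbf{K})$ is representable by a group scheme $\underline{\Aut}(V/\mathbf{K})$ locally of finite type over $\mathbf{K}$ (this is Grothendieck's theorem, valid since $V$ is projective), and that discreteness of $\Aut(V/\mathbf{K})$ means exactly that this group scheme is zero-dimensional, i.e.\ that its identity component $\underline{\Aut}^{\circ}(V/\mathbf{K})$ is trivial. First I would recall that formation of the automorphism scheme commutes with field extension: $\underline{\Aut}(V/\mathbf{K})\times_{\mathbf{K}}\mathbf{L}\cong\underline{\Aut}(V_{\mathbf{L}}/\mathbf{L})$, so that the $\mathbf{L}$-points of the latter are the $\mathbf{L}$-points of the base change of the former. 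Since $\mathbf{K}$ is algebraically closed and $\underline{\Aut}(V/\mathbf{K})$ is zero-dimensional and locally of finite type, every connected component of it is the spectrum of a finite local $\mathbf{K}$-algebra with residue field $\mathbf{K}$; in particular each component is geometrically connected and its reduced subscheme is just a single $\mathbf{K}$-point.

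The key step is then to argue that base change to $\mathbf{L}$ produces no new points. Because each connected component $Z$ of $\underline{\Aut}(V/\mathbf{K})$ is $\Spec$ of an Artinian local $\mathbf{K}$-algebra $A$ with $A/\mathfrak{m}_A=\mathbf{K}$, the base change $Z\times_{\mathbf{K}}\mathbf{L}=\Spec(A\otimes_{\mathbf{K}}\mathbf{L})$ is $\Spec$ of an Artinian local $\mathbf{L}$-algebra with residue field $\mathbf{L}$ (here one uses that $\mathbf{K}\to\mathbf{L}$ is a field extension, so $A\otimes_{\mathbf{K}}\mathbf{L}$ is again local with the same nilpotent structure). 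Hence $Z$ has exactly one $\mathbf{L}$-point, namely the image of its unique $\mathbf{K}$-point. Taking the union over all components, which are in bijection before and after base change, we conclude $\underline{\Aut}(V/\mathbf{K})(\mathbf{L})=\underline{\Aut}(V/\mathbf{K})(\mathbf{K})$, i.e.\ $\Aut(V_{\mathbf{L}}/\mathbf{L})=\Aut(V/\mathbf{K})$ as groups, compatibly with the inclusion.

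The part requiring the most care is the representability and base-change compatibility of the automorphism functor: one needs $V$ projective (or at least proper with some flatness hypothesis) to invoke that $\underline{\Aut}(V/\mathbf{K})$ exists as a scheme locally of finite type, and one needs to know that discreteness of the topological group $\Aut(V/\mathbf{K})$ is equivalent to $\dim\underline{\Aut}(V/\mathbf{K})=0$ — for this it is enough that $\underline{\Aut}^{\circ}(V/\mathbf{K})$, being a connected group scheme of finite type over an algebraically closed field, has more than one closed point as soon as it is positive-dimensional, contradicting discreteness. An alternative, more elementary route avoiding schemes would be to spread $V$ out over a finitely generated $\mathbf{K}$-subalgebra, but for a projective variety the group-scheme argument above is cleanest, so that is the approach I would adopt.
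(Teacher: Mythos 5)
Your argument is correct and complete: representability of the automorphism functor by a group scheme locally of finite type, the equivalence of discreteness with zero-dimensionality, and the key observation that an Artinian local $\mathbf{K}$-algebra with residue field $\mathbf{K}$ remains Artinian local with residue field $\mathbf{L}$ after the base change $-\otimes_{\mathbf{K}}\mathbf{L}$, so no new points (hence no new automorphisms) appear. The paper gives no direct proof of this lemma --- it simply defers to \cite[Lemma 2.2]{Og19} --- and the argument there is essentially the one you propose, so your write-up matches the intended proof while having the merit of being self-contained.
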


\begin{proof} See \cite[Lemma 2.2]{Og19}.
\end{proof}

\begin{lemma}\label{minimal} Let $X$ and $Y$ be two birationally  equivalent smooth projective surfaces over $\overline{\mathbb{F}}_p$. Assume $X$ is minimal and $\Aut(X/\overline{\mathbb{F}}_p)$ is finitely generated. Then $\Aut(Y/\overline{\mathbb{F}}_p)$ is also finitely generated.
\end{lemma}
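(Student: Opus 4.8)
The plan is to exhibit $\Aut(Y/\overline{\mathbb{F}}_p)$ as a finite-index subgroup of the finitely generated group $\Aut(X/\overline{\mathbb{F}}_p)$ and then conclude by Proposition \ref{group}. First I would pass to a birational morphism $\pi\colon Y\to X$. Since $X$ is minimal and (as is automatic in all the applications of this lemma, where $X$ is an Enriques surface) has nonnegative Kodaira dimension, the minimal model of $Y$ is unique and equal to $X$, so the composite of the successive contractions of $(-1)$-curves on $Y$ is a birational morphism $\pi\colon Y\to X$. I would factor it as $Y=Y_n\to Y_{n-1}\to\cdots\to Y_0=X$ with $Y_{i+1}\to Y_i$ the blow-up of a closed point $p_i\in Y_i$, and record the resulting finite configuration $\mathcal{B}=\{(Y_i,p_i)\}_{i=0}^{n-1}$ of (possibly infinitely near) points over $X$, so that $\Sigma:=\pi(\mathrm{Exc}\,\pi)\subset X$ is a finite set. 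Recalling the classical fact, valid in any characteristic, that a birational self-map of a minimal smooth projective surface of nonnegative Kodaira dimension is an automorphism, one gets $\Bir(Y)\cong\Bir(X)=\Aut(X)$ by conjugation by $\pi$; since $\Aut(Y)\subseteq\Bir(Y)$ this yields an injective homomorphism $\iota\colon\Aut(Y)\hookrightarrow\Aut(X)$, $f\mapsto\pi\circ f\circ\pi^{-1}$ (injective because an automorphism trivial as a rational self-map is the identity).

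Next I would pin down the image of $\iota$. For $f\in\Aut(Y)$, the element $\iota(f)$ is an automorphism of $X$, hence a morphism, so the identity $\pi\circ f=\iota(f)\circ\pi$ of rational maps is in fact an identity of morphisms $Y\to X$; comparing exceptional loci gives $f(\mathrm{Exc}\,\pi)=\mathrm{Exc}\,\pi$, whence $\iota(f)(\Sigma)=\Sigma$ and, more precisely, $\iota(f)$ preserves the configuration $\mathcal{B}$. Thus $\mathrm{im}(\iota)\subseteq\mathrm{Dec}(X,\mathcal{B}):=\{g\in\Aut(X)\,:\,g\cdot\mathcal{B}=\mathcal{B}\}$. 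Conversely, an automorphism of a surface preserving the center of a blow-up lifts canonically to the blow-up, so iterating through the tower $Y_0\to\cdots\to Y_n$ shows that every $g\in\mathrm{Dec}(X,\mathcal{B})$ lifts to $\Aut(Y)$; hence $\mathrm{im}(\iota)=\mathrm{Dec}(X,\mathcal{B})$, and in any case $\mathrm{Dec}(X,\mathcal{B})\subseteq\mathrm{im}(\iota)\subseteq\Aut(X)$.

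It then remains to show $\mathrm{Dec}(X,\mathcal{B})$ has finite index in $\Aut(X)$, and this is the only place where the ground field $\overline{\mathbb{F}}_p$ and the finite-generation hypothesis are used. I would pick generators $g_1,\dots,g_m$ of $\Aut(X/\overline{\mathbb{F}}_p)$; since $X$, $\pi$ (hence $\mathcal{B}$) and each $g_j$ are of finite type over $\overline{\mathbb{F}}_p$, there is a power $q$ of $p$ for which $X$ and $\mathcal{B}$ are defined over $\mathbb{F}_q$ and each $g_j$ lies in $\Aut(X_{\mathbb{F}_q}/\mathbb{F}_q)\subseteq\Aut(X/\overline{\mathbb{F}}_p)$; as this subgroup contains all the generators, $\Aut(X/\overline{\mathbb{F}}_p)=\Aut(X_{\mathbb{F}_q}/\mathbb{F}_q)$. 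This group acts on the finite set of length-$n$ configurations of (infinitely near) points over $X_{\mathbb{F}_q}$ all of whose members are $\mathbb{F}_q$-rational, so the orbit of $\mathcal{B}$ is finite, i.e.\ $[\Aut(X):\mathrm{Dec}(X,\mathcal{B})]<\infty$. Combining with the previous paragraph, $\Aut(Y)\cong\mathrm{im}(\iota)$ is a finite-index subgroup of the finitely generated group $\Aut(X/\overline{\mathbb{F}}_p)$, hence finitely generated by Proposition \ref{group}.

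I expect the main obstacle to be the middle step: cleanly matching $\Aut(Y)$ with the subgroup of $\Aut(X)$ preserving the configuration $\mathcal{B}$ of blown-up, possibly infinitely near, points. This rests on the minimal model theory of surfaces in positive characteristic (existence of minimal models; for nonnegative Kodaira dimension, uniqueness and the coincidence $\Bir=\Aut$ for minimal models), together with the bookkeeping of infinitely near points under descent along $\pi$ and lifting through blow-ups. The finiteness input, by contrast, should be soft: it only uses that a finitely generated group of automorphisms of a variety over $\overline{\mathbb{F}}_p$ is already defined over a single finite field, so it has finite orbits on the $\mathbb{F}_q$-rational points of that variety and of its iterated blow-ups.
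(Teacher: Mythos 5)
Your argument is correct and coincides with the one the paper invokes: the paper's proof of this lemma is just the citation ``implied in the proof of \cite[Theorem 1.1.(1)]{Og19}'', and that argument is precisely yours --- identify $\Aut(Y/\overline{\mathbb{F}}_p)$ with the stabilizer in $\Aut(X/\overline{\mathbb{F}}_p)$ of the (possibly infinitely near) blown-up points, and use that a finitely generated automorphism group over $\overline{\mathbb{F}}_p$ is defined over a single finite field to get finite index, then apply Proposition \ref{group}. Your explicit caveat that the statement implicitly requires nonnegative Kodaira dimension (automatic in the intended application to Enriques surfaces) is apt, since that is what guarantees uniqueness of the minimal model and $\Bir(X)=\Aut(X)$.
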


\begin{proof} This is implied in the proof of \cite[Theorem 1.1.(1)]{Og19}.
\end{proof}

\begin{proof}[Proof of Proposition \ref{finitefield}] It follows from Theorem \ref{finiteness}, Lemma \ref{basechange} and Lemma \ref{minimal}.
\end{proof}

\section{Non-finite generatedness}\label{sec4}

In this section, we construct an explicit example which is birational to an Enriques surface and whose automorphism group is not finitely generated. Indeed, we show that the same construction as in \cite{KO19} gives the desired example at least in characteristic $p > 3$, after recalling some basic facts on Kummer surfaces of product type and Mukai's construction \cite{Mu10}.

\medskip We begin with the fact that $t\in \mathbb{F}_p(t)\subset \mathbf{K}$ is transcendental over $\mathbb{F}_p$. Consider two elliptic curves $E$ and $F$ defined by the Weierstrass equations $y^2 = x(x - 1)(x - t)$, and $v^2 = u(u - 1)(u - s)$ ($s\in \mathbf{K}$ and $s\neq 0, 1$), respectively. The branch points of the quotient map $E \to E/\langle -1_E\rangle\cong \mathbb{P}^1$ given by $(x, y) \mapsto x$ are exactly the points $0$, $1$, $t$ and $\infty$ of $\mathbb{P}^1$. The same assertion holds for $F$ if $t$ is replaced by $s$.

We note that $E$ is not supersingular (see e.g., \cite[Page 200]{Mu08}). If we take a supersingular elliptic curve defined over $\mathbf{K}$ as $F$, then we deduce that $E$ and $F$ are not isogenous over $\mathbf{K}$ (see e.g., \cite[Page 137]{Mu08}). Thus, we make the following assumption throughout Section \ref{sec4}.

\begin{assumption}\label{assumption} The two elliptic curves $E$ and $F$ are not isogenous.
\end{assumption}

Let $\tx:= \mathrm{Km}(E \times F)$ be the Kummer surface accociated to the abelian surface $E \times F$. Since $E$ and $F$ are not isogenous, the Picard number $\rho(\tx)$ of $\tx$ is $18$ (see e.g., \cite{Sh75}), so that $\tx$ is non-supersingular. As in Figure \ref{fig}, $\tx$ contains $24$ smooth rational curves $E_i, F_i\,(1 \leq i \leq 4)$ and $C_{ij}\,(1 \leq i, j \leq 4)$, which form the so-called \textit{double Kummer pencil} on $\tx$. Let $\{a_i\}^4_{i=1}$ (resp. $\{b_i\}^4_{i=1}$) be the $2$-torsion points of $F$ (resp. $E$), then the $E_i$ (resp. $F_i$) arise from the elliptic curves $E \times \{a_i\}$ (resp. $\{b_i\} \times F$) on $E \times F$, while the $C_{ij}$ are the exceptional curves over the $A_1$-singular points of the quotient surface $E \times F/\langle -1_{E\times F}\rangle$. We will use the names of rational curves in Figure \ref{fig} throughout Section \ref{sec4}.

\begin{figure}[h]
	\centering
	\includegraphics[width=0.44\linewidth]{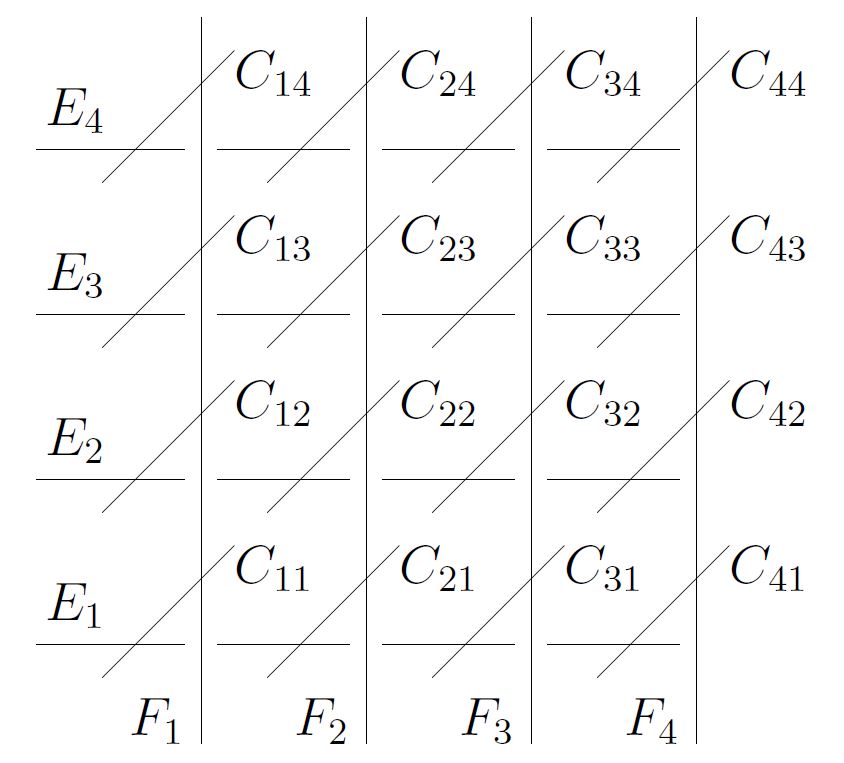}
	\caption{Curves $E_i$, $F_j$ and $C_{ij}$}
	\label{fig}
\end{figure}

Let us denote the unique point $E_j \cap C_{ij}$ (resp. $F_i \cap C_{ij}$) by $P_{ij}$ (resp. $P^{\prime}_{ij}$). Choose affine coordinates $x$ and $u$ of $E_j$ and $F_i$, respectively, such that
\[ P_{1j} = 1, \ P_{2j} = t, \ P_{3j} = \infty, \ P_{4j} = 0 \]
on $E_j$ with respect to the coordinate $x$, and
\[ P^{\prime}_{i1} = 1, \ P^{\prime}_{i2} = s, \ P^{\prime}_{i3} = \infty, \ P^{\prime}_{i4} = 0 \]
on $F_i$ with respect to the coordinate $u$.

\medskip We recall the construction of Mukai \cite{Mu10} for $\tx = \mathrm{Km}(E \times F)$. Although Mukai works over the complex numbers, we can check that the same construction is also available in our setting.

Consider $\varepsilon := [(1_E, -1_F)] \in \Aut(\tx)$, which is an anti-symplectic involution of $\tx$. Let $T := \tx/\langle\varepsilon\rangle$ be the quotient surface and let $q : \tx \to T$ be the quotient map. Then $T$ is a smooth projective surface which can also be obtained by blowing up $\mathbb{P}^1 \times \mathbb{P}^1$ at the $16$ points. We identify $\mathbb{P}^1 \times \mathbb{P}^1$ with a smooth quadric surface $Q$ in $\mathbb{P}^3$ via the Segre embedding $\mathbb{P}^1 \times \mathbb{P}^1 \subset \mathbb{P}^3$. Then there exists a Cremona involution of $\mathbb{P}^3$ preserving $Q$ (see \cite[Section 2]{Mu10}), so that it restricts to a birational automorphism $\tau^{\prime} \in \Bir(Q)$. Moreover, $\tau^{\prime}$ lifts to $\tau \in \Aut(T)$ by \cite[Lemma 9]{Mu10}, and $\tau$ further lifts to an anti-symplectic involution $\theta \in \Aut(\tx)$.

Let us set $X := \tx/\langle\theta\rangle$, and denote the quotient map by $\pi : \tx \to X$. Then by \cite[Proposition 2]{Mu10}, $X$ is an Enriques surface with a numerically trivial involution induced by $\varepsilon \in \Aut(\tx)$. We introduce the following notations for curves and points on the Enriques surface $X$:
\[ H_j := \pi(E_j),\ D_{ij} := \pi(C_{ij}),\ Q_{ij} := \pi(P_{ij}). \]
Via the isomorphism $\pi|_{E_j} : E_j \to H_j$, we also regard $x$ as an affine coordinate of $H_j$. Then $Q_{ij} \in H_j$ and
\[ x(Q_{1j}) = 1,\ x(Q_{2j}) = t,\ x(Q_{3j}) = \infty,\ x(Q_{4j}) = 0. \]
We have $\pi^{-1}(H_j) = E_j \cup F_j$ for all $1 \leq j \leq 4$ and
\[ \pi^{-1}(D_{ij}) = C_{ij} \cup C_{ji},\ \pi^{-1}(Q_{ij}) = \{P_{ij}, P_{ji}^{\prime}\} \]
for all $i \neq j$, while
\[ \pi^{-1}(D_{ii}) = C_{ii} \cup \theta(C_{ii}),\ \pi^{-1}(Q_{ii}) = \{P_{ii} \cup P_i\}, \]
again for all $1 \leq j \leq 4$, where $P_i$ is the unique intersection point of $\theta(C_{ii}) \cap F_i$.

\begin{construction} Let $\mu_1 : X_1 \to X$ be the blow-up at the point $Q_{32} \in H_2$, i.e., the blowing up at $\infty$ under the coordinate $x$ of $H_2$. Let $E_{\infty} := \mathbb{P}(T_{X, Q_{32}}) \cong \mathbb{P}^1$ be the exceptional divisor of $\mu_1$. We choose three mutually different points on $\mathbb{P}(T_{X, Q_{32}})$, say $Q_{32k}\,(1\leq k \leq 3)$. Let $\mu_2 : X_2 \to X_1$ be the blow-up of $X_1$ at the $3$ points $Q_{32k}$. Set $\mu := \mu_1 \circ \mu_2 : X_2 \to X_1 \to X$. We denote by $E_{32k}$ the exceptional curve over $Q_{32k}$ under $\mu_2$ and by $E^{\prime}_{\infty}$ the proper transform of $E_{\infty}$ under $\mu_2$.
\end{construction}

\begin{theorem}\label{example} $\Aut(X_2)$ is not finitely generated.
\end{theorem}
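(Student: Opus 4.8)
The plan is to present $\Aut(X_2)$ as a subgroup of $\Aut(X)$, produce infinitely many automorphisms in it from the Kummer and Mordell--Weil structure, and then upgrade ``infinite'' to ``not finitely generated'' using the transcendence of $t$ over $\mathbb{F}_p$.

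First I would carry out the standard reduction. Since an Enriques surface is minimal and not ruled, $\Bir(X_2)=\Bir(X_1)=\Bir(X)=\Aut(X)$, and the blow-down $\mu\colon X_2\to X$ gives an injection $\Aut(X_2)\hookrightarrow\Aut(X)$. An element $g\in\Aut(X)$ lifts to $X_1$ iff $g(Q_{32})=Q_{32}$, and this lift extends to $X_2$ iff the induced automorphism $\overline{dg}$ of $\mathbb{P}(T_{X,Q_{32}})$ preserves $\{Q_{32k}\}_{k=1}^{3}$; since $\overline{dg}$ then lies in a finite group, Proposition~\ref{group} reduces the theorem to showing that
\[ G_0:=\bigl\{\,g\in\Aut(X)\ \big|\ g(Q_{32})=Q_{32},\ dg_{Q_{32}}\ \text{scalar}\,\bigr\} \]
is not finitely generated. (One may also note that $E_{321},E_{322},E_{323}$ are the only smooth rational $(-1)$-curves of $X_2$, so that $\Aut(X_2)$ permutes them with kernel $G_0$.) Thus everything takes place in the second-order stabilizer of the point $Q_{32}=H_2\cap D_{32}$.

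Next, to see that $G_0$ is already infinite, I would use the explicit double Kummer pencil to exhibit a genus-one fibration $\phi\colon X\to\mathbb{P}^1$ for which $H_2$ is a component of the reducible fibre $\phi^{-1}(\phi(Q_{32}))$ meeting at least three other components, $Q_{32}$ is a node of that fibre, and the jacobian rational elliptic surface $J(\phi)$ has positive Mordell--Weil rank --- this last point being where Assumption~\ref{assumption} enters, via a Shioda--Tate count against $\rho(X)=10$ and the reducible fibres of $\phi$. The group $\mathrm{MW}(J(\phi))$ acts on the torsor $X$ by translations $t_\sigma\in\Aut(X)$; the finite-index subgroup $M$ of those $\sigma$ meeting the identity component of the fibre over $\phi(Q_{32})$ is infinite, and for $\sigma\in M$ the translation $t_\sigma$ preserves every component of that fibre, hence fixes $Q_{32}$, restricts to the identity on $H_2$ (it fixes the $\ge 3$ distinct points where $H_2$ meets the other components), and has determinant-one differential at $Q_{32}$ because its lift to the K3-cover $\widetilde{X}$ is symplectic --- so $dt_{\sigma,Q_{32}}=\mathrm{id}$ and $\{t_\sigma\mid\sigma\in M\}\subseteq G_0$.

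The remaining point --- and I expect it to be the real obstacle --- is that the abelian group $\{t_\sigma\}$ is itself finitely generated, so one must exhibit ``infinitely much more'' inside $G_0$. The plan is to construct an infinite sequence $h_1,h_2,\dots\in G_0$ whose restrictions $h_n|_{H_2}$ all lie in the one-dimensional unipotent subgroup $U\subset\mathrm{PGL}_2(\mathbf{K})=\Aut(H_2)$ fixing $Q_{32}$ with trivial differential (so $U\cong(\mathbf{K},+)$) and are $\mathbb{F}_p$-linearly independent; then $G_0$, hence $\Aut(X_2)$, surjects onto a non-finitely-generated subgroup of $(\mathbf{K},+)$ and is therefore not finitely generated. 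This is exactly where the hypothesis $\mathbb{F}_p(t)\subseteq\mathbf{K}$ is indispensable: $(\overline{\mathbb{F}}_p,+)$ is locally finite --- which is why $\Aut$ is finitely generated over $\overline{\mathbb{F}}_p$, cf. Proposition~\ref{finitefield} --- whereas $(\mathbf{K},+)\supseteq\bigoplus_{n\ge 0}\mathbb{F}_p\,t^n$ is not. Concretely I would obtain the $h_n$ again from Mordell--Weil translations, but now of a whole family of genus-one fibrations through $Q_{32}$ (or of one fibration for which $H_2$ is a bisection rather than a fibre component), and extract the independence of the parameters $h_n|_{H_2}\in\mathbf{K}$ from the coordinates $x(Q_{ij})\in\{0,1,t,\infty\}$ of the double Kummer pencil, where the transcendence of $t$ over $\mathbb{F}_p$ makes itself felt.
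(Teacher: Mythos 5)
Your reduction step is essentially the paper's: the paper also passes from $\Aut(X_2)$ to a second-order stabilizer of $Q_{32}$ (it uses $\mathrm{Ine}(X,Q_{32},T_{Q_{32}})$, the subgroup with $df|_{T_{Q_{32}}}=\mathrm{id}$, shown to be of finite index in $\Aut(X_2)$ via the finiteness of the bicanonical representation, Proposition~\ref{represent}; your variant via the three points $Q_{32k}$ on $\mathbb{P}(T_{X,Q_{32}})$ is the same idea), and it likewise aims at a representation $\rho$ into $\mathrm{Ine}(H_2,Q_{32},T_{H_2,Q_{32}})\cong(\mathbf{K},+)$ whose image is shown to be non-finitely generated using the transcendence of $t$. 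So the target is right.

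The genuine gap is in the final step, which is exactly the part you flag as ``the real obstacle'' and then leave as a plan. The paper (following Keum--Oguiso) does \emph{not} try to produce infinitely many $\mathbb{F}_p$-independent translations directly from infinitely many fibrations. Instead it produces just \emph{two} automorphisms: an additive element $g\in\mathrm{Ine}(X,Q_{32},T_{Q_{32}})$ with $\rho(g)=a\neq 0$, and a multiplicative element $f\in\Aut(X)$ with $f^2\in\mathrm{Dec}(X,Q_{32})$ acting on $H_2$ by $x\mapsto t^2x$ (this comes from comparing the Jacobians of the two genus-one fibrations with fibres of type $I_8$ and $IV^{\ast}$, via N\'eron's local analysis). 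Conjugation then gives $\rho\bigl(f^{-2n}\circ g\circ f^{2n}\bigr)=t^{-2n}a$, and $\bigoplus_{n\ge 0}\mathbb{F}_p\,t^{-2n}a$ is not finitely generated. Your proposal never constructs the scaling element $f$, and without it there is no mechanism forcing the parameters $h_n|_{H_2}\in\mathbf{K}$ to be $\mathbb{F}_p$-independent; ``a whole family of genus-one fibrations through $Q_{32}$'' gives no control on those parameters. A secondary problem: the Mordell--Weil translations $t_\sigma$ you exhibit in your middle paragraph restrict to the identity on $H_2$ (as you yourself argue, they fix $\ge 3$ points of $H_2$), so they lie in $\ker\rho$ and do not even supply the single element $g$ with $\rho(g)\neq 0$; obtaining such a $g$ already requires the more delicate analysis of the $I_8$ fibration in [KO19] together with Proposition~\ref{represent}. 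As written, the proof establishes only that the stabilizer $G_0$ is infinite, not that it is non-finitely generated.
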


\begin{proof} First we note that, this implies Theorem \ref{nonfinite}. As argued in \cite[Proposition 3.2]{KO19} by using the finiteness of the bicanonical representation in odd characteristic (Proposition \ref{represent}) instead, we obtain that \[ \mathrm{Ine}(X, Q_{32}, T_{Q_{32}}):= \{f \in \mathrm{Dec}(X, Q_{32})\,|\,df|_{T_{Q_{32}}} = \mathrm{id}_{T_{Q_{32}}}\} \]
is a subgroup of $\Aut(X_2)$ of finite index. Thus, it is enough to show that $\mathrm{Ine}(X, Q_{32}, T_{Q_{32}})$ is not finitely generated by Proposition \ref{group}.

By \cite[Lemma 3.3]{KO19} (see also \cite[Proposition 3.6]{Og19}), the differential maps $df|_{T_{X,Q_{32}}}$ are simultaneously diagonalizable for all $f \in \mathrm{Dec}(X, Q_{32})$. Combined with the isomorphism for each point $\mathcal{Q} \in \mathbb{P}^1$,
\[ (\mathbf{K}, +) \cong \{f \in \mathrm{Ine}(\mathbb{P}^1, \mathcal{Q})\,|\,df|_{T_{\mathbb{P}^1, \mathcal{Q}}} = \mathrm{id}_{T_{\mathbb{P}^1, \mathcal{Q}}}\} =: \mathrm{Ine}(\mathbb{P}^1, \mathcal{Q}, T_{\mathbb{P}^1, \mathcal{Q}})\]
given by $c \mapsto (z \mapsto z + c)$, if we choose an affine coordinate $z$ of $\mathbb{P}^1$ such that $z(\mathcal{Q}) = \infty$, we eventually obtain a representation
\[ \rho: \mathrm{Ine}(X, Q_{32}, T_{Q_{32}}) \to \mathrm{Ine}(H_2, Q_{32}, T_{H_2, Q_{32}}) \cong (\mathbf{K}, +), \]
where, for the last isomorphism, we use the affine coordinate $x$ of $H_2$.

By Proposition \ref{ko3.4} below, the image $\mathrm{Im}\,\rho$ contains the additive subgroup $M$ generated by $\{t^{-2n}a\,|\,n \in  \mathbb{Z}_{\geq 0}\}$, which is not finitely generated as $a \neq 0$ and $t$ is transcendental over $\mathbb{F}_p$. This implies that the abelian group $\mathrm{Im}\,\rho$ itself is not finitely generated, and hence neither is $\mathrm{Ine}(X, Q_{32}, T_{Q_{32}})$.
\end{proof}

\begin{proposition}\label{ko3.4} There is an element $a \in \mathbf{K}^{\times}$ such that $t^{-2n}a \in \mathrm{Im}\,\rho$ for all positive integers $n$.
\end{proposition}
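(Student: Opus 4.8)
The plan is to produce explicit automorphisms of $X$ fixing $Q_{32}$ whose action on the tangent direction along $H_2$ realizes the translations $z \mapsto z + t^{-2n}a$ in the coordinate $z = x$ of $H_2$ (where $z(Q_{32}) = \infty$). Following the strategy of \cite[Proposition 3.4]{KO19}, I would build these automorphisms from two ingredients that live naturally on $X$. First, there is a fixed automorphism coming from Mukai's construction: the involution $\tau \in \Aut(T)$ induced by the Cremona transformation of $\mathbb{P}^3$, or rather an automorphism of $X$ derived from it, which does \emph{not} lie in $\mathrm{Dec}(X, Q_{32})$ but moves the configuration of curves in a controlled way. Second, there are translation-type automorphisms of $X$ coming from translations by torsion (or more precisely, from automorphisms of $\tx = \mathrm{Km}(E \times F)$ induced by translations of $E$ by points whose $x$-coordinates are controlled by $t$), which descend through $\theta$ and restrict on $H_2 \cong E_2/\pm$ to maps visibly expressible in the coordinate $x$.

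\medskip The key computation is to track the coordinate $x$ on $H_2$ (equivalently on $E_2$) under these maps. The branch points on $H_j$ are at $x = 1, t, \infty, 0$, corresponding to $Q_{1j}, Q_{2j}, Q_{3j}, Q_{4j}$. An automorphism of $X$ permuting the curves $D_{ij}$ and $H_j$ according to some symmetry of the double Kummer configuration induces on $H_2$ a fractional linear transformation permuting $\{1, t, \infty, 0\}$ accordingly; composing such maps one generates a group of Möbius transformations of $\mathbb{P}^1$ with coefficients in $\mathbb{F}_p(t)$. The heart of the matter is a \textbf{ping-pong / commutator} argument: a suitable product $g$ of two of these automorphisms fixes $Q_{32}$ (i.e. $x = \infty$) with derivative $1$ there, so $g$ induces a translation $x \mapsto x + c_1$ on $H_2$ for some specific $c_1 \in \mathbf{K}^\times$ expressible in $t$; and conjugating $g$ by an automorphism $h$ that scales the coordinate by $t^{-2}$ near $\infty$ (for instance, an automorphism swapping two pairs of branch points so as to send $x \mapsto t/x$ or similar, composed appropriately, whose square acts by $x \mapsto t^2 x$ up to translation) yields $h^{-n} g h^n$ inducing $x \mapsto x + t^{-2n}c_1$. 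Setting $a := c_1$ gives the claim. Thus the skeleton is: (i) identify the two explicit automorphisms $g$ and $h$ of $X$ inside $\mathrm{Dec}(X, Q_{32})$ and $\Aut(X)$ respectively, using Mukai's construction and torsion translations on $E \times F$; (ii) compute their induced Möbius actions on the coordinate $x$ of $H_2$, checking $g$ acts by a translation and $h$ scales by $t^2$; (iii) check $g \in \mathrm{Ine}(X, Q_{32}, T_{Q_{32}})$ (derivative exactly $1$, not just fixing $Q_{32}$), which requires the diagonalizability input already invoked in the proof of Theorem \ref{example}; (iv) conclude $\rho(h^{-n} g h^n) = t^{-2n}a$.

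\medskip I expect the main obstacle to be step (i)–(ii): producing an \emph{honest biregular} automorphism of $X$ (not merely a birational one, and not merely an automorphism of the K3-cover that fails to descend) whose action on $H_2$ in the coordinate $x$ is exactly a translation by something of the form (constant)$\cdot t^{\pm 1}$, with the ``$-2$'' exponent appearing upon conjugation. On the K3-cover $\tx = \mathrm{Km}(E \times F)$ one has obvious automorphisms from $\Aut(E) \times \Aut(F)$ and from translations by $2$-torsion, but these are of finite order and will not by themselves produce an infinitely-generated image; the infinite part must come from the interplay of Mukai's Cremona-type involution $\theta$ with these, exactly as in \cite{KO19}. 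Verifying that the relevant composite descends to $X = \tx/\langle\theta\rangle$ amounts to checking commutation with $\theta$, and computing its effect on the branch locus of $H_2 \to \mathbb{P}^1$ is a finite but delicate coordinate calculation — this is where the transcendence of $t$ over $\mathbb{F}_p$ ultimately enters, guaranteeing that the subgroup $M = \langle t^{-2n}a : n \geq 0 \rangle \subseteq (\mathbf{K},+)$ is genuinely not finitely generated. Since the excerpt states this last fact is exactly what \cite[Section 3]{KO19} establishes over $\mathbb{C}$, the remaining work is to confirm that each coordinate identity used there holds verbatim in characteristic $p > 3$, the only sensitive points being that $2$ is invertible (so the Kummer construction and the $\pm 1$-quotients behave as expected) and that $E$ is ordinary while $F$ is supersingular (so Assumption \ref{assumption} holds and $\rho(\tx) = 18$).
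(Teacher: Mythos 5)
Your overall skeleton --- find $g$ acting on $H_2$ as a translation $x \mapsto x + a$, find an $h$ whose square acts as $x \mapsto t^2 x$, and conjugate to obtain $x \mapsto x + t^{-2n}a$ --- is exactly the shape of the paper's argument (and of \cite[Proposition 3.4]{KO19}), and your steps (iii)--(iv) (checking the derivative via the simultaneous diagonalizability already invoked, then conjugating) are correct. The genuine gap is in steps (i)--(ii): the sources from which you propose to manufacture $g$ and $h$ cannot work. Translations of $E \times F$ by torsion, the symmetries coming from $\Aut(E)\times\Aut(F)$, and M\"obius transformations of $H_2$ permuting the four branch points $\{0,1,t,\infty\}$ all generate \emph{finite} groups; note in particular that a translation of $E$ by a non-torsion point does not commute with $-1_E$ and hence does not descend to $\mathrm{Km}(E\times F)$ at all, while $2$-torsion translations have order $2$. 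No ping-pong or commutator manipulation inside a finite group of M\"obius transformations can produce a nontrivial additive translation $x\mapsto x+a$ or a scaling $x\mapsto t^2x$ with $t$ transcendental, and your caveat that ``the infinite part must come from the interplay with Mukai's involution'' does not repair this, since $\theta$ is itself of finite order.

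The missing idea is the actual engine of \cite{KO19}, which the paper's proof adapts: one constructs two genus one fibrations $\varphi_{M_i}\colon X\to\mathbb{P}^1$ whose unique reducible fibers $M_1$, $M_2$ are of Kodaira type $I_8$ and $IV^{\ast}$ (uniqueness being read off from \cite[Table A]{Og89} on the K3-cover). Their Jacobians are rational elliptic surfaces of positive Mordell--Weil rank ($1$ and $2$ respectively, by Shioda--Tate with a single reducible fiber), and the Mordell--Weil groups act on the torsors $\varphi_{M_i}$ by fiberwise translation, producing \emph{infinite-order} automorphisms of $X$. The element $g\in\mathrm{Ine}(X,Q_{32},T_{Q_{32}})$ with $\rho(g)\neq 0$ and the element $f$ with $f^2\in\mathrm{Dec}(X,Q_{32})$ and $f^2(x)=t^2x$ on $H_2$ are extracted from these Mordell--Weil translations; computing their effect on the coordinate $x$ of $H_2$ in characteristic $p>3$ is precisely why the paper substitutes \cite[Section III.17]{Ne64} (or \cite[Section 6]{Ta75}) for Kodaira's analytic local theory and \cite[Lemma 2.17]{Ma19} for \cite[Lemma 2.6]{Ko86}. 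Without this fibration-theoretic input your construction of $g$ and $h$ does not get off the ground.
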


\begin{proof}[Sketch of Proof] We refer to \cite{KO19} for full details. As in the proof of \cite[Proposition 3.4]{KO19}, one constructs two genus one fibrations $\varphi_{M_i} : X \to \mathbb{P}^1\,(i =1, 2)$ on $X$ associated to divisors $M_1$ and $M_2$ on $X$ of Kodaira's type $I_8$ and $IV^{\ast}$, respectively, which are induced by elliptic fibrations $\Phi_i : \tx \to \mathbb{P}^1\,(i =1, 2)$ on the K3-cover $\tx$, respectively. By \cite[Table A]{Og89} (or \cite[Table 1]{KS08}) and the construction of the Enriques involution $\theta$, we obtain that $\varphi_{M_i}$ has no reducible fibers other than $M_i$ for $i = 1, 2$. Let us consider the proper smooth relatively minimal Jacobian fibrations $\varphi_i : R_i \to \mathbb{P}^1$ of $\varphi_{M_i}\,(i = 1, 2)$, which are rational elliptic surfaces.

As argued in [KO19, Page 9] by using Proposition \ref{represent} instead again, we obtain that there exists an element $g \in \mathrm{Ine}(X, Q_{32}, T_{Q_{32}})$ such that $\rho(g) \neq 0$. By using \cite[Lemma 2.17]{Ma19} instead of \cite[Lemma 2.6]{Ko86}, and \cite[Section III.17]{Ne64} (or \cite[Section 6]{Ta75}) instead of \cite[Theorem 9.1]{Ko63}, the same argument as in [KO19, Page 10 and 11] leads to an element $f\in \Aut(X)$ such that $f^2 \in \mathrm{Dec}(X, Q_{32})\subset \mathrm{Dec}(X, H_2)$ and $f^2(x) = t^2x$ on $H_2$, where $x$ denotes the coordinate on  $H_2$. The assertion then follows clearly.
\end{proof}

\end{document}